\def\relint{\mathop\mathrm{relint}\nolimits}
\def\bd{\mathop\mathrm{bd}\nolimits}
\def\conv{\mathop\mathrm{conv}\nolimits}
\def\aff{\mathop\mathrm{aff}\nolimits}
\def\lin{\mathop\mathrm{lin}\nolimits}
\newcommand{\norm}[1]{ \|#1 \| }
\def\S{\mathbb{S}}
\def\R{\mathbb{R}}
\def\N{\mathbb{N}}
\def\K{\mathcal{K}}
\def\B{\mathbb{B}}
\def\s{\mathbb{S}}
\def\cir{\mathrm{R}}
\def\inr{\mathrm{r}}
\newcommand{\D}{\mathrm{D}}
\def\wid{\mathrm{w}}
\newtheorem{theorem}{Theorem}[section]
\newtheorem{lemma}{Lemma}[section]
\newtheorem{corollary}{Corollary}[section]
\newtheorem{proposition}{Proposition}[section]
\newtheorem{conjecture}{Conjecture}[section]
\numberwithin{equation}{section}
\begin{document}
	
    \title[Additive colourful Carath\'eodory results and radii]{Additive colourful Carath\'eodory type results with an application to radii}

	\author[M. Brugger]{Matthias Brugger}
	\address{Zentrum Mathematik, Technische Universit\"at M\"unchen, Boltzmannstr. 3, 85747 Garching bei M\"unchen, Germany}
	\email{matthias.brugger@tum.de}
	
	\author[M. Fiedler]{Maximilian Fiedler}
	\address{Zentrum Mathematik, Technische Universit\"at M\"unchen, Boltzmannstr. 3, 85747 Garching bei M\"unchen, Germany}
	\email{maximilian.fiedler@tum.de}
	
	\author[B. Gonz\'alez Merino]{Bernardo Gonz\'{a}lez Merino}
	\address{Departamento de Analisis Matemático, Facultad de Matem\'aticas, Universidad de Sevilla, Apdo. 1160, 41080-Sevilla, Spain}
	\email{bgonzalez4@us.es}
	
	\author[A. Kirschbaum]{Anja Kirschbaum}
	\address{Zentrum Mathematik, Technische Universit\"at M\"unchen, Boltzmannstr. 3, 85747 Garching bei M\"unchen, Germany}
	\email{anja.kirschbaum@tum.de}
	
	\subjclass[2010]{Primary 52A20, Secondary 52A40} \keywords{Additive Helly theorem; Additive colourful Carath\'eodory theorem; Convex bodies; Outer and Inner radii; Vector sums; Minkowski addition}
	
	\thanks{The third author is partially supported by
		Fundaci\'{o}n S\'{e}neca, through the Programa de Formaci\'on Postdoctoral de Personal Investigador, project reference 19769/PD/15,
and the Programme in Support of Excellence Groups of the Regi\'{o}n de Murcia, Spain, project reference 19901/GERM/15, and by
MINECO project reference MTM2015-63699-P, Spain.}

	\date{\today}
	\begin{abstract}
		In this paper we study the behavior of the circumradius with respect to the Minkowski addition in generalized Minkowski spaces.
To do so, we solve additive colourful Carath\'eodory type results, under certain equilibria conditions.
		
	\end{abstract}
	\maketitle
	
	\section{Introduction}
	Let us denote by $\K^n$ the set of all $n$-dimensional convex bodies, i.e., convex and compact sets.

	The \emph{circumradius} of a convex body $K$ with respect to a second convex body $C$ is the smallest
	rescalation $\lambda C$ containing a translation of $K$, and is denoted by $\cir(K,C)$. The \emph{inradius}
	of $K$ w.r.t.~$C$ is the largest rescalation $\lambda C$ containing a translation of $K$, and is denoted by $\inr(K,C)$.
	The \emph{diameter} of $K$ w.r.t.~$C$ is the maximum distance between two points of $K$
	measured w.r.t.~$\norm{\cdot}_{(C-C)/2}$, and is denoted by $\D(K,C)$. Finally, the \emph{minimal
    width} of $K$ w.r.t.~$C$ is the smallest distance between two parallel supporting hyperplanes to $K$
	measured w.r.t.~$\norm{\cdot}_{(C-C)/2}$, and is denoted by $\wid(K,C)$. In the Euclidean space $(\R^n,\|\cdot\|_2)$
    with unit ball $\B^n_2:=\{x\in\R^n:\|x\|_2\leq 1\}$, where $\|x\|_2:=\sqrt{x_1^2+\cdots+x_n^2}$, and unit sphere
    $\S^{n-1}=\{x\in\R^n:\|x\|_2=1\}$, we write $\cir(K):=\cir(K,\B^n_2)$, and the same for $\inr(K)$, $\D(K)$, and $\wid(K)$.
	
	The authors of \cite[Theorems 1.1 and 1.2]{GoHC} initiated the study of the behavior of the successive radii (which are generalizations
	of the classical radii above) w.r.t.~the Minkowski (or vectorial) addition in the Euclidean
    space with unit ball $\B^n_2$. In particular, the authors showed that for any two convex bodies $K$ and $L$
	\begin{equation}\label{eq:BasicRelations}
	\begin{array}{cc}
	\cir(K)+\cir(L)\leq \sqrt{2}\cir(K+L), & \inr(K)+\inr(L)\leq\inr(K+L),\\
	\D(K)+\D(L)\leq\sqrt{2}\D(K+L), & \wid(K)+\wid(L)\leq\wid(K+L).
	\end{array}
	\end{equation}
    Other authors have studied the same questions for the different families of successive radii in the Euclidean space \cite{Go14},
    for the mean successive radii \cite{AHS}, for the Firey (or $p$) sum \cite{GoHC14}, or for the Orlicz-Minkowski sum \cite{CYL}.

    In this paper, however, our aim is to focus on the Minkowski addition of convex bodies, and to compute its
    behavior under the circumradius \emph{measured with respect to an arbitrary convex set $C\in\K^n$}. Hence,
    we study inequalities of the form
    \[
    c_1\,\sum_{i=1}^j\cir(K_i,C)\leq\cir(K_1+\cdots+K_j,C)\leq c_2\,\sum_{i=1}^j\cir(K_i,C),
    \]
    where $K_i,C\in\K^n$, $i\in[j]$, $j\in\N$, and $c_1,c_2>0$ are some absolute constants. We can easily deduce
    the upper bound in the inequality above, i.e., for any $K_i,C\in\K^n$, $i\in[j]$, $j\in\N$,
    since $K_i\subset x^i+\cir(K_i,C)C$, for some $x^i\in\R^n$, then
    \[
    K_1+\cdots+K_j\subset (x^1+\cdots+x^j)+(\cir(K_1,C)+\cdots+\cir(K_j,C))C,
    \]
    and thus
    \[
    \cir(K_1+\cdots+K_j,C)\leq\cir(K_i,C)+\cdots+\cir(K_j,C).
    \]
    Therefore, the lower bound is the interesting case, and the main case in our investigations.

    Let $K\in\K^n$. We denote by $\bd(K)$ the \emph{boundary} of $K$. Moreover, letting $p\in\bd(K)$,
    we say that $u\in\R^n\setminus\{0\}$ is an \emph{exterior outer normal} to some $K$ at $p$ if
    $x^Tu\leq  p^Tu$ for every $x\in K$.
    The solution to the first inequality in \eqref{eq:BasicRelations} was achieved by two \emph{main ingredients}. The first of them is the \emph{optimal containment under homothetics condition}
	(cf.~\cite[Thm.~2.3]{BrKo}).
	\begin{proposition}\label{prop:OOC}
		Let $K,C\in\K^n$ be such that $K\subset C$. The following statements are equivalent:
		\begin{enumerate}
			\item $\cir(K,C)=1$.
			\item There exist $p^1,\dots,p^j\in K\cap\bd (C)$, exterior outer normals $u^1,\dots,u^j$ to $C$
			at $p^1,\dots,p^j$, respectively, and scalars $\lambda_1,\dots,\lambda_j>0$, $2\leq j\leq n+1$,
			such that
			\[
			0=\sum_{i=1}^j\lambda_iu^i.
			\]
		\end{enumerate}
        Moreover, if $C=\B^n_2$, the conditions above are also equivalent to
        \begin{enumerate}
        \item[(3)] There exist $p^1,\dots,p^j\in K\cap\S^{n-1}$ and scalars $\lambda_1,\dots,\lambda_j>0$,
        $2\leq j\leq n+1$, such that
			\[
			0=\sum_{i=1}^j\lambda_ip^i.
			\]
        \item[(4)] For every $a\in\R^n\setminus\{0\}$, $K\cap\{x\in\s^{n-1}:x^Ta\geq 0\}\neq\emptyset$.
        \end{enumerate}
	\end{proposition}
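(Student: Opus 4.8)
The plan is to establish the equivalence $(1)\Leftrightarrow(2)$ for a general body $C$ and then, in the case $C=\B^n_2$, to link these with $(3)$ and $(4)$. Since the circumradius, the contact condition, and the normal vectors are all unchanged under a simultaneous translation of $K$ and $C$, I would first assume without loss of generality that $0\in\inter(C)$, so that the support function satisfies $h_C(u)>0$ for every $u\neq 0$. Throughout I would use the support-function description of containment, namely that $A\subset t+\lambda C$ holds if and only if $h_A(u)\le t^Tu+\lambda h_C(u)$ for all $u\in\S^{n-1}$, and I would record that for an exterior outer normal $u$ to $C$ at $p\in\bd(C)$ one has $p^Tu=h_C(u)$, while the contact normals are exactly the directions $u$ with $h_K(u)=h_C(u)$.

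For $(2)\Rightarrow(1)$, note that $K\subset C$ already gives $\cir(K,C)\le 1$. Suppose some translate satisfies $K-t\subset\lambda C$. Evaluating at each contact point, $p^i-t\in\lambda C$ yields $(p^i-t)^Tu^i\le\lambda h_C(u^i)$, and using $(p^i)^Tu^i=h_C(u^i)$ this rearranges to $(1-\lambda)h_C(u^i)\le t^Tu^i$. Multiplying by $\lambda_i>0$ and summing, the right-hand side collapses to $t^T\sum_i\lambda_iu^i=0$, so $(1-\lambda)\sum_i\lambda_ih_C(u^i)\le 0$; since every $h_C(u^i)>0$ the sum is positive, forcing $\lambda\ge 1$. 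Hence no smaller homothet contains a translate of $K$, and $\cir(K,C)=1$.

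The main work is $(1)\Rightarrow(2)$, which I would prove by contraposition. Let $U$ denote the set of unit exterior outer normals to $C$ at points of $K\cap\bd(C)$; upper semicontinuity of the normal cone map together with compactness of the contact set makes $U$ compact, so $\conv(U)$ is compact, and one checks $U=\{u\in\S^{n-1}:h_K(u)=h_C(u)\}$. Assume $0\notin\conv(U)$. Then the separating hyperplane theorem produces $v$ and $c>0$ with $v^Tu\ge c$ for all $u\in U$. I would then construct a strictly smaller container $\varepsilon v+\lambda C$ with $\lambda=1-\delta<1$: containment amounts to $\delta h_C(u)-\varepsilon v^Tu\le g(u)$ for all $u$, where $g:=h_C-h_K\ge 0$ vanishes exactly on $U$. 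Splitting $\S^{n-1}$ into the directions with $g(u)\le\eta$ (where $v^Tu\ge c/2$ by continuity, so a large enough ratio $\varepsilon/\delta$ beats the left-hand side) and those with $g(u)\ge\eta$ (where small $\delta,\varepsilon$ suffice), I can choose $\delta,\varepsilon>0$ making the inequality hold everywhere, contradicting $\cir(K,C)=1$. Thus $0$ is a positive combination of normals in $U$, and Carath\'eodory's theorem reduces the representation to $j\le n+1$ terms; since no single unit vector can vanish, $j\ge 2$. The delicate point is the uniform control of $v^Tu$ near the contact set and the compatibility of the two regimes for $\delta$ and $\varepsilon$, which is where the compactness and continuity arguments must be made carefully.

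Finally, for $C=\B^n_2$ I would use that the exterior outer normals to $\B^n_2$ at a boundary point $p\in\S^{n-1}$ are exactly the positive multiples of $p$, and that $K\cap\bd(\B^n_2)=K\cap\S^{n-1}$; rewriting $0=\sum_i\lambda_iu^i$ as $0=\sum_i\mu_ip^i$ with $\mu_i>0$ gives $(2)\Leftrightarrow(3)$ immediately. For $(3)\Leftrightarrow(4)$, if $0=\sum_i\mu_ip^i$ then for any $a\neq 0$ the weighted sum $\sum_i\mu_i(p^i)^Ta=0$ forces some $(p^i)^Ta\ge 0$, placing $p^i$ in $K\cap\{x\in\s^{n-1}:x^Ta\ge 0\}$; conversely, if $0\notin\conv(K\cap\S^{n-1})$ a strict separation yields $a$ with $x^Ta<0$ on $K\cap\S^{n-1}$, so the halfspace condition fails for that $a$. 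This closes all the equivalences.
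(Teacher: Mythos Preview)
The paper does not give its own proof of this proposition; it is stated as a known result with a reference to \cite[Thm.~2.3]{BrKo}, and the Euclidean items (3)--(4) are recorded without argument. Your proposal is therefore not competing against anything in the paper, and what you wrote is essentially the standard proof one finds in the literature (and in the cited Brandenberg--K\"onig paper): support-function reformulation of containment, the separation/shrinking argument for $(1)\Rightarrow(2)$, Carath\'eodory to bound $j$, and the immediate specializations when $C=\B^n_2$.

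Two minor remarks. First, your identification $U=\{u\in\S^{n-1}:h_K(u)=h_C(u)\}$ already gives compactness of $U$ directly (closed subset of the sphere), so the appeal to upper semicontinuity of the normal-cone map is unnecessary. Second, in the direction $(4)\Rightarrow(3)$ you should mention explicitly that $K\cap\S^{n-1}$ is compact (intersection of compact sets), so that $\conv(K\cap\S^{n-1})$ is closed and strict separation is available, and then invoke Carath\'eodory to reduce to $j\le n+1$ positive coefficients; you do the separation step but leave the Carath\'eodory reduction implicit. Neither of these is a gap, just a place to tighten the write-up.
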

	
	The second ingredient is a somewhat hidden but repeatedly used minmax result (see \cite{GoHC}), which can be expressed
	in the following proposition.
	\begin{proposition}\label{prop:Norm2}
		Let $U:=\{u^1,\dots,u^i\}\subset \mathbb S^{n-1}$, $V:=\{v^1,\dots,v^j\}\subset \mathbb S^{n-1}$,
        and $\lambda_1,\dots,\lambda_i,\mu_1,\dots,\mu_j>0$
		be such that
		\[
		0=\sum_{k=1}^i\lambda_ku^k=\sum_{l=1}^j\mu_lv^l.
		\]
		Then there exist $k\in[i]$ and $l\in[j]$ such that
		\[
        \left\|u^k+v^l\right\|_2\geq\sqrt{2}.
        \]
	\end{proposition}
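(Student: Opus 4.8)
The plan is to reduce the norm inequality to a statement about inner products and then exploit the two equilibrium conditions simultaneously. Since $u^k$ and $v^l$ are unit vectors, expanding the square gives
\[
\|u^k+v^l\|_2^2 = \|u^k\|_2^2 + 2\langle u^k,v^l\rangle + \|v^l\|_2^2 = 2 + 2\langle u^k,v^l\rangle,
\]
so the desired inequality $\|u^k+v^l\|_2\geq\sqrt{2}$ holds if and only if $\langle u^k,v^l\rangle\geq 0$. Thus it suffices to produce a single pair $(k,l)$ with nonnegative scalar product, and the threshold $\sqrt{2}$ is seen to be exactly calibrated to the sign of the inner product.

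To do this, I would argue by contradiction and assume $\langle u^k,v^l\rangle<0$ for all $k\in[i]$ and all $l\in[j]$. The key observation is that the two equilibrium relations can be coupled by pairing them inside a single bilinear form: since $\sum_{k=1}^i\lambda_k u^k=0$, bilinearity of the scalar product yields
\[
0 = \left\langle \sum_{k=1}^i\lambda_k u^k,\ \sum_{l=1}^j\mu_l v^l \right\rangle = \sum_{k=1}^i\sum_{l=1}^j \lambda_k\mu_l\,\langle u^k,v^l\rangle.
\]

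Now the contradiction comes purely from the signs. All weights $\lambda_k,\mu_l$ are strictly positive by hypothesis, and under the contradiction assumption every inner product $\langle u^k,v^l\rangle$ is strictly negative, so each summand $\lambda_k\mu_l\langle u^k,v^l\rangle$ is strictly negative. As the double sum contains at least one term (since $i,j\geq 1$), it is strictly negative overall, contradicting the vanishing established above. Hence some pair satisfies $\langle u^k,v^l\rangle\geq 0$, which by the opening reduction gives $\|u^k+v^l\|_2\geq\sqrt{2}$.

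As for difficulty, there is essentially no computational obstacle here: the whole argument is a one-line sign count once the right reformulation is in place. The only genuinely nontrivial step, and the one I expect to be the crux, is recognizing that the two separate equilibrium conditions should be \emph{multiplied together} through the bilinear form $\langle\,\cdot\,,\,\cdot\,\rangle$ evaluated at the pair of zero vectors, after which the pigeonhole-type conclusion on signs is immediate.
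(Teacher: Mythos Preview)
Your proof is correct and essentially follows the paper's approach: the paper proves the generalization (Theorem~\ref{thm:norm_2}) by the same sign argument, observing that $0=\sum_l\mu_l\langle w,v^l\rangle$ forces some $\langle w,v^l\rangle\geq 0$, and then expanding the squared norm exactly as you do. The only cosmetic difference is that the paper picks $u^k$ first and then uses the equilibrium of $V$ directly (so in fact only one of the two equilibrium conditions is needed for this special case), whereas you package both into the bilinear pairing and argue by contradiction.
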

	
    The acquainted reader will quickly realize that Proposition \ref{prop:Norm2} hides an \emph{additive colourful Carath\'eodory (or Helly) type} result. Similar results have been already considered by other authors (cf.~\cite{ABG,BaJC}). Furthermore, the intimate connection between some Helly type results and some radii notions is not that surprising since one can directly link Proposition \ref{prop:OOC} with Helly's theorem via Rubin's lemma,
    and strengthen Proposition \ref{prop:OOC}(2)
    replacing ``$j\leq n+1$" by ``$j\leq h(C)$", where $h(C)$ denotes the well-known \emph{Helly number of $C$} (cf.~\cite{DGK}).

    Our first result generalizes Proposition \ref{prop:Norm2} onto an arbitrary number of a finite amount of subsets.
    Before stating it, we remember that $K,L\subset\R^n$ are \emph{mutually orthogonal} if $x^Ty=0$ for every $x\in K$ and $y\in L$.
\begin{theorem}\label{thm:norm_2}
		Let $U_i:=\{u_1^i,\dots,u_{k_i}^i\}\subset r_i\mathbb S^{n-1}$, $r_i>0$, $i\in[j]$, $j\in\mathbb N$,
        $\lambda^i_1,\dots,\lambda^i_{k_i}>0$, $2\leq k_i\leq n+1$, and $c\in\R^n$, be such that
		\[
		0=\sum_{l=1}^{k_1}\lambda^1_lu^1_l=\cdots=\sum_{l=1}^{k_j}\lambda^j_lu^j_l.
		\]
		Then
		\[
        \max_{l_1,\dots,l_j}\left\|u^1_{l_1}+\cdots+u^j_{l_j}-c\right\|_2\geq\sqrt{r_1^2+\cdots+r_j^2}.
		\]
		Moreover, equality holds if and only if $c=0$, $j\in[n]$, and $U_k$ and $U_l$ are mutually orthogonal,
        for any choice $1\leq k<l\leq j$.
\end{theorem}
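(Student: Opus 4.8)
The plan is to prove both the inequality and the equality characterisation from a single first/second moment (averaging) computation, exploiting the equilibria hypotheses as vanishing first moments. For each $i\in[j]$ set $\Lambda_i:=\sum_{l=1}^{k_i}\lambda^i_l$ and put $p^i_l:=\lambda^i_l/\Lambda_i>0$, a probability distribution on $[k_i]$. Draw indices $L_1,\dots,L_j$ independently with $L_i\sim(p^i_l)_l$. The hypothesis $0=\sum_l\lambda^i_lu^i_l$ is then exactly the first-moment identity $E[u^i_{L_i}]=\sum_l p^i_l\,u^i_l=0$ for every $i$, which is the engine of the whole argument.

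First I would expand the squared objective. Using $\|u^i_{l_i}\|_2^2=r_i^2$, for any choice $(l_1,\dots,l_j)$,
\[
\Bigl\|\sum_{i=1}^j u^i_{l_i}-c\Bigr\|_2^2=\sum_{i=1}^j r_i^2+2\!\!\sum_{1\le i<i'\le j}\!\!(u^i_{l_i})^T u^{i'}_{l_{i'}}-2\,c^T\!\sum_{i=1}^j u^i_{l_i}+\|c\|_2^2 .
\]
Taking expectations over the independent draw, each cross term factors as $E[u^i_{L_i}]^T E[u^{i'}_{L_{i'}}]=0$, the linear term is $-2\,c^T\sum_i E[u^i_{L_i}]=0$, and only the constant part survives:
\[
E\Bigl[\bigl\|\sum_{i=1}^j u^i_{L_i}-c\bigr\|_2^2\Bigr]=\sum_{i=1}^j r_i^2+\|c\|_2^2 .
\]
Since the maximum of finitely many reals dominates any convex combination of them, the maximum of the squared objective is at least $\sum_i r_i^2+\|c\|_2^2\ge\sum_i r_i^2$, and taking square roots gives the claimed bound.

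For the equality case I would read the same identity backwards. Equality forces every choice to satisfy $\|\sum_i u^i_{l_i}-c\|_2^2\le\sum_i r_i^2$, i.e.\ the quantity $2\sum_{i<i'}(u^i_{l_i})^T u^{i'}_{l_{i'}}-2\,c^T\sum_i u^i_{l_i}+\|c\|_2^2$ is $\le0$ for every choice, while its average equals $\|c\|_2^2\ge0$; as all weights $p^i_l$ are strictly positive, this both forces $c=0$ and forces that quantity to vanish for \emph{every} choice. With $c=0$ this reads $\sum_{i<i'}(u^i_{l_i})^T u^{i'}_{l_{i'}}=0$ identically in $(l_1,\dots,l_j)$. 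To split this into the $\binom{j}{2}$ individual orthogonality conditions, fix a pair $(a,b)$ and the indices $l_a,l_b$, and average the remaining coordinates against the $p$-distribution: every cross term indexed by a pair other than $\{a,b\}$ contains an averaged index and hence has expectation zero, leaving $(u^a_{l_a})^T u^b_{l_b}=0$ for all $l_a,l_b$, which is mutual orthogonality of $U_a$ and $U_b$. Conversely, if $c=0$ and the $U_i$ are pairwise mutually orthogonal, all cross terms vanish and $\|\sum_i u^i_{l_i}\|_2^2=\sum_i r_i^2$ for every choice, so equality holds. Finally $j\le n$ is automatic: each equilibrium with $k_i\ge2$ nonzero vectors forces $\dim\lin(U_i)\ge1$, and pairwise orthogonality of the subspaces $\lin(U_i)$ in $\R^n$ yields $\sum_i\dim\lin(U_i)\le n$, whence $j\le n$.

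The step I expect to be the main obstacle is precisely the passage, in the equality analysis, from the single scalar identity $\sum_{i<i'}(u^i_{l_i})^T u^{i'}_{l_{i'}}=0$ holding for all choices to the separate pairwise orthogonality relations; the conditional-averaging device above is what makes this clean, since $E[u^i_{L_i}]=0$ annihilates every cross term not indexed by the frozen pair. The only other point requiring care is ensuring the weights $p^i_l$ are strictly positive (guaranteed by $\lambda^i_l>0$), as this is exactly what upgrades ``all values $\le0$ with nonnegative mean'' into ``every value vanishes''.
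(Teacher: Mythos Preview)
Your proof is correct and complete, but takes a genuinely different route from the paper's. The paper constructs a single witnessing tuple $(m_1,\dots,m_j)$ greedily: having fixed $m_1,\dots,m_{t-1}$, the equilibrium $\sum_l\lambda^t_l u^t_l=0$ guarantees an index $m_t$ with $(u^t_{m_t})^T(u^1_{m_1}+\cdots+u^{t-1}_{m_{t-1}}-c)\ge0$, and the resulting telescoping expansion yields $\|u^1_{m_1}+\cdots+u^j_{m_j}-c\|_2^2\ge\|c\|_2^2+\sum_i r_i^2$. You instead compute the weighted second moment over \emph{all} tuples at once and obtain the same constant $\sum_i r_i^2+\|c\|_2^2$ as the exact average rather than as a lower bound for one specific tuple; the paper in fact remarks, just after stating the theorem, that the special case $U_i=\{\pm u^i\}$ ``follows by a basic averaging argument'', and your proposal shows that nothing more is needed in general. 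The payoff is in the equality analysis: because your expectation identity holds with strictly positive weights, you get immediately that the squared objective is \emph{constant} across all choices once $c=0$, and your conditional-averaging trick then isolates each pairwise inner product cleanly. The paper's greedy argument instead has to upgrade a statement about the particular witnessing sequence to one holding ``for every $l_t\in[k_t]$'', which is correct but less transparent.
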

    Theorem \ref{thm:norm_2} implies, in particular, the existence of indices $l_i\in[k_i]$, $i\in[j]$, such that
    $\left\|u^1_{l_1}+\cdots+u^j_{l_j}\right\|_2\geq\sqrt{r_1^2+\cdots+r_j^2}$. Let us also note that the
    case $U_i=\{-u^i,u^i\}$ in Theorem \ref{thm:norm_2} follows by a basic averaging argument.

    In this paper we study questions analogous to \eqref{eq:BasicRelations} specially focused on the circumradius functional.
    For any $K,C\in\K^n$, it is known that all radii can be described by means of the outer radius (cf.~\cite{BrKo2}), so in some sense
    we focus in the \emph{first natural step} towards understanding the behavior of the radii functionals with respect
    to the Minkowski addition. Making use of Theorem \ref{thm:norm_2}, we extend \cite[Theorem 1.1, $i=n$]{GoHC}.
    \begin{theorem}\label{thm:radii_addition_2}
    Let $K_i\in\K^n$, $i \in [j]$, $j\in\mathbb N$. Then
		\[
		\cir(K_1)+\cdots+\cir(K_j)\leq\sqrt{j}\cir(K_1+\cdots+K_j).
		\]
    Moreover, equality holds if $K_k$ and $K_l$ are contained in orthogonal linear subspaces,
    for any choice $1\leq k<l\leq j$, and $\cir(K_1)=\cdots=\cir(K_j)$.
    \end{theorem}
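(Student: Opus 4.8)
The plan is to reduce the inequality to the additive colourful Carath\'eodory bound of Theorem~\ref{thm:norm_2}, feeding it the optimal-containment data supplied by Proposition~\ref{prop:OOC}(3), and to bridge $\sqrt{\sum r_i^2}$ and $\sum r_i$ by Cauchy--Schwarz. We may assume each $\cir(K_i)>0$, since a summand with zero circumradius is a single point that only translates the total sum and can be removed after a short induction on $j$. Write $r_i:=\cir(K_i)$. Using translation invariance of the circumradius, I would first translate each $K_i$ so that its circumball is centred at the origin; then $K_i\subset r_i\B^n_2$ with $\cir(K_i/r_i,\B^n_2)=1$ and $K_i/r_i\subset\B^n_2$.

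Applying Proposition~\ref{prop:OOC}(3) to each $K_i/r_i$ produces points $q^i_1,\dots,q^i_{k_i}\in(K_i/r_i)\cap\S^{n-1}$ and scalars $\lambda^i_1,\dots,\lambda^i_{k_i}>0$, $2\le k_i\le n+1$, with $0=\sum_l\lambda^i_lq^i_l$. Rescaling, set $u^i_l:=r_iq^i_l$, so that $U_i:=\{u^i_1,\dots,u^i_{k_i}\}\subset r_i\S^{n-1}$, each $u^i_l\in K_i$, and $0=\sum_l\lambda^i_lu^i_l$ for every $i\in[j]$. This is exactly the hypothesis of Theorem~\ref{thm:norm_2}.

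Now let $R:=\cir(K_1+\cdots+K_j)$ with circumcentre $x_0$ of the translated sum, so that $K_1+\cdots+K_j\subset x_0+R\B^n_2$. The decisive step is to invoke Theorem~\ref{thm:norm_2} with the shift $c:=x_0$, obtaining indices $l_1,\dots,l_j$ with
\[
\norm{u^1_{l_1}+\cdots+u^j_{l_j}-x_0}\ge\sqrt{r_1^2+\cdots+r_j^2}.
\]
Since each $u^i_{l_i}\in K_i$, the point $p:=u^1_{l_1}+\cdots+u^j_{l_j}$ lies in $K_1+\cdots+K_j\subset x_0+R\B^n_2$, so $\norm{p-x_0}\le R$; hence $R\ge\sqrt{\sum_i r_i^2}$. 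Cauchy--Schwarz then gives $\sum_i r_i\le\sqrt{j}\sqrt{\sum_i r_i^2}\le\sqrt{j}\,R$, which is the assertion. I expect this to be the main obstacle: applied with $c=0$ the theorem only yields two points of the sum at distance $\sqrt{\sum_i r_i^2}$, which bounds the circumradius merely by half that amount; using $c=x_0$ is what lets the far point be measured directly from the circumcentre.

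For the equality case, assume all $r_i=r$ and that the $K_i$ lie in pairwise orthogonal linear subspaces $V_i$. The Cauchy--Schwarz step is then an equality. Moreover, the circumcentre of $K_i$ lies in $V_i$, so after translation $K_i\subset V_i$ with $\norm{x_i}\le r$ for all $x_i\in K_i$; by orthogonality any $x=\sum_i x_i$ in the sum satisfies $\norm{x}^2=\sum_i\norm{x_i}^2\le jr^2$, whence $\cir(K_1+\cdots+K_j)\le\sqrt{j}\,r$. Combined with the lower bound already established this forces $R=\sqrt{j}\,r$, so $\sum_i r_i=jr=\sqrt{j}\,R$, giving equality.
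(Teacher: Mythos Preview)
Your argument is correct and follows essentially the same route as the paper: the paper first isolates the quadratic bound $\sum_i\cir(K_i)^2\le\cir(\sum_iK_i)^2$ as a separate theorem (proved via Proposition~\ref{prop:OOC} and Theorem~\ref{thm:norm_2} exactly as you do), and then passes to the linear inequality by H\"older/Cauchy--Schwarz. The only cosmetic difference is that the paper bounds $\cir(\sum_iK_i)$ below by $\cir(\sum_iU_i)=\min_c\max_{l_1,\dots,l_j}\norm{\sum_iu^i_{l_i}-c}_2$ and then invokes Theorem~\ref{thm:norm_2} for the minimizing $c$, whereas you plug in the specific circumcentre $c=x_0$ directly; both uses are valid and equivalent.
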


        Let us observe that the \emph{if} in the equality case above is not \emph{only if}. For instance,
        the planar sets
        \[
        K:=\conv(\{(\pm 1,0)^T,(0,\pm(\sqrt{2}-1))^T\}),\quad L:=\conv(\{(0,\pm 1)^T,(\pm(\sqrt{2}-1),0)^T\}),
        \]
        fulfill $\cir(K)=\cir(L)=1$ and $\cir(K+L)=\sqrt{2}$,  but $K$ and $L$ are not mutually orthogonal.

        Theorem \ref{thm:norm_2} presents an optimal estimate only if $U_1,\dots,U_j$ are \emph{no more} than $j\leq n$ sets.
        If we are given more than $n$ of those sets, then the optimal analogous result to Theorem \ref{thm:norm_2} turns out
        to be more involved. In this regard, we have been able to show the following additive Helly type result, which
        improves Theorem \ref{thm:norm_2} in the case of $n=2$, $j=3$, and $r_i=1$, $i\in[3]$,
        from $\sqrt{3}$ to $2$, which is the optimal value in this case.

     \begin{theorem}\label{thm:norm_2_3sets}
     Let $U_i:=\{u_1^i,\dots,u_{k_i}^i\}\subset \S^1$, $i \in [3]$,
     $\lambda^i_1,\dots,\lambda^i_{k_i}>0$, $2\leq k_i\leq 3$, and $c\in\R^2$, be such that
		\[
		0=\sum_{l=1}^{k_1}\lambda^1_lu^1_l=\sum_{l=1}^{k_2}\lambda^2_lu^2_l=\sum_{l=1}^{k_3}\lambda^3_lu^3_l.
		\]
		Then
		\[
    \max_{l_1,l_2,l_3}\left\|u^1_{l_1}+u^2_{l_2}+u^3_{l_3}-c\right\|_2 \geq 2.
		\]
		Moreover, equality holds if and only if $c=0$ and, after a suitable common rotation, we have that
        \[
        U_i=\{\pm(\cos(i\pi/3),\sin(i\pi/3))\}, \quad i \in [3].
        \]
     \end{theorem}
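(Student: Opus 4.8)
The plan is to reduce the statement to a lower bound on the circumradius of a Minkowski sum, and to treat that bound by an explicit two-dimensional optimization. First I would record that, since the maximum of the Euclidean norm over a Minkowski sum is attained at a sum of extreme points and every extreme point of $\conv(U_i)$ lies in $U_i$, for $K:=\conv(U_1)+\conv(U_2)+\conv(U_3)$ one has
\[
\max_{l_1,l_2,l_3}\norm{u^1_{l_1}+u^2_{l_2}+u^3_{l_3}-c}=\max_{y\in K}\norm{y-c}\geq\min_{c'\in\R^2}\max_{y\in K}\norm{y-c'}=\cir(K).
\]
Because each defining relation $0=\sum_l\lambda^i_lu^i_l$ with $u^i_l\in\S^1$ is exactly condition (3) of Proposition \ref{prop:OOC}, each body $K_i:=\conv(U_i)$ satisfies $\cir(K_i)=1$ with circumcentre $0$. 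Thus everything follows once I show the core inequality $\cir(K_1+K_2+K_3)\geq 2$ for planar bodies of circumradius $1$, and the bound $\geq 2$ for the theorem then holds for every $c$ simultaneously.

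For the core inequality I would pass to the diameter, using $\D(K)\leq 2\cir(K)$ together with the fact that the diameter is the maximal width, $\D(K)=\max_{\theta}\big(h_K(\theta)+h_K(\theta+\pi)\big)$, and that widths are additive under Minkowski addition. Hence it suffices to produce a direction in which the widths of $K_1,K_2,K_3$ add up to at least $4$. I would first settle the case where every $U_i=\{v_i,-v_i\}$ is an antipodal pair (all $k_i=2$), say $v_i=(\cos\beta_i,\sin\beta_i)$, so the width of $K_i$ in direction $\theta$ equals $2\abs{\cos(\theta-\beta_i)}$. The key simplification is the identity
\[
\max_{\theta}\sum_{i=1}^3\abs{\cos(\theta-\beta_i)}=\max_{\varepsilon\in\{-1,1\}^3}\norm{\varepsilon_1v_1+\varepsilon_2v_2+\varepsilon_3v_3},
\]
which turns the problem into choosing signs. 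Writing the gaps $g_1,g_2,g_3\geq0$ between the three directions $\beta_i$ on the projective circle (so $g_1+g_2+g_3=\pi$) and ordering them, the best sign pattern reduces, after expanding $\norm{\cdot}^2=3+2\sum_{i<j}\cos(\beta_i-\beta_j)$, to the elementary inequality
\[
\cos g_1+\cos g_2+\cos(g_1+g_2)\geq\tfrac12 .
\]
A short boundary analysis on the simplex $\{g_i\geq0,\ \sum g_i=\pi\}$ shows the minimum is $\tfrac12$, attained only at $g_1=g_2=g_3=\pi/3$; this yields $\max_\theta\sum_i 2\abs{\cos(\theta-\beta_i)}\geq 4$, hence $\cir(K_1+K_2+K_3)\geq 2$ in the all-segment case, with equality forcing the three directions to be equally spaced.

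The main obstacle is the passage from antipodal pairs to the general case $k_i\in\{2,3\}$, where some $U_i$ are triangles and the widths are no longer pure absolute cosines. A thin triangle inscribed in $\S^1$ can approximate a diameter segment, so segments are the expected worst case; I would make this precise by an extremality argument, viewing $\max_\theta\sum_i\big(h_{K_i}(\theta)+h_{K_i}(\theta+\pi)\big)$ as a maximum of functionals that are linear in the support functions and arguing that, under the constraint $\cir(K_i)=1$, this quantity is minimised when each $K_i$ degenerates to a segment, thereby reducing to the case already handled. Finally, for the equality characterisation I would combine the rigidity in the inequality chain---$\cir(K)=2$, $c$ equal to the (unique) circumcentre, and $\D(K)=2\cir(K)$ together with the equal-spacing condition $g_i=\pi/3$---to conclude that all $U_i$ must be antipodal pairs with $60^\circ$ spacing and $c=0$, which after a common rotation is precisely $U_i=\{\pm(\cos(i\pi/3),\sin(i\pi/3))\}$. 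I expect the extremality step to be the most delicate point of the argument.
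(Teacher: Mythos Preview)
Your route is genuinely different from the paper's. The paper never introduces the diameter; it proves directly a \emph{Claim}: after rotating so that $-c$ points along $e_2$, two of the three sets contain vectors $u^1_1,u^2_1$ with $(u^1_1)^Tu^2_1\geq\tfrac12$ and $(u^1_1+u^2_1)^Te_2\geq 0$. This is obtained by a pigeonhole on three arcs of length $\pi/3$ covering the upper half of $\S^1$, combined with the fact that each $U_i$ meets every closed half-circle (Proposition~\ref{prop:OOC}(4)); a short case analysis then locates the desired pair. Once the Claim holds, the equilibrium of the remaining set yields $u^3_1$ with $(u^1_1+u^2_1-c)^Tu^3_1\geq 0$, and expanding $\|u^1_1+u^2_1+u^3_1-c\|_2^2$ gives $\geq 4$ in one line. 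The equality case is read off from the scalar-product equalities.

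Your reduction to $\cir(K_1+K_2+K_3)\geq 2$ is clean, and the segment case is correct: with the gaps ordered, the sign choice $\varepsilon=(1,1,1)$ does give $\cos g_1+\cos g_2+\cos(g_1+g_2)\geq\tfrac12$, with equality only at $g_i=\pi/3$. The genuine gap is precisely the step you flag as delicate. Convexity of $K_i\mapsto D(K_1+K_2+K_i)$ buys nothing here, because the constraint set $\{K:\cir(K)=1\}$ is not convex, so there is no abstract reason for the minimiser to be a segment. Concretely, you cannot dominate a triangle by a diameter segment in width: for the equilateral triangle $T$ inscribed in $\S^1$ one has $w_T(\theta)\leq\sqrt3<2$ for every $\theta$, while every diameter segment $S=[-v,v]$ has $w_S(v)=2$, so no segment satisfies $w_S\leq w_T$ pointwise and the replacement may \emph{increase} the maximal total width. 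The equality analysis inherits the same problem: $\D(K)=2\cir(K)$ does not force central symmetry of $K$ (take the triangle with vertices $(\pm1,0),(0,\tfrac12)$), so the conclusion that each $U_i$ is an antipodal pair does not follow from the diameter chain alone. Without an independent argument handling the triangle case, the proposal is incomplete.
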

     We would like to point out a fundamental difference between the equality cases of
     Theorems \ref{thm:norm_2} and \ref{thm:norm_2_3sets}, which reflects a reason why the latter is more
     complicated than the former. On the one hand, if $U_1,\dots,U_j$ achieve equality in Theorem \ref{thm:norm_2},
     then \emph{any} choice of vectors $u^i_{l_i}\in U_i$, $i \in [j]$, $j\in [n]$, provides the optimal
     bound $\left\|u^1_{l_1}+\cdots+u^j_{l_j}\right\|_2=\sqrt{r_1^2+\cdots+r_j^2}$. On the other hand,
     \emph{some} choices in the sets of pairs of vectors $\{\pm(\cos(i\pi/3),\sin(i\pi/3))\}$, $i \in [3]$
     (cf.~Theorem \ref{thm:norm_2_3sets}), may lead to \emph{worse values} than the optimal one, for instance,
     \[
     \begin{split}
     (\cos(\pi/3),\sin(\pi/3))^T-(\cos(2\pi/3),\sin(2\pi/3))^T+(\cos(\pi),\sin(\pi))^T& \\
     =(1/2,\sqrt{3}/2)^T-(-1/2,\sqrt{3}/2)^T+(-1,0)^T=(0,0)^T.&
     \end{split}
     \]

     Again, a consequence of Theorem \ref{thm:norm_2_3sets} is the following result.
     \begin{corollary}\label{cor:radii_2_3sets}
     Let $K_i\in\K^2$, $i \in [3]$, be such that $K_i\subset\B^2_2$ with
     $\cir(K_i)=1$, for $i \in [3]$. Then
     \[
     \cir(K_1+K_2+K_3)\geq 2.
     \]
     Moreover, equality holds only if, after a suitable common rotation, $K_i$ contains a segment $[(\cos(i\pi/3),\sin(i\pi/3)),-(\cos(i\pi/3),\sin(i\pi/3))]$, $i \in [3]$.
     \end{corollary}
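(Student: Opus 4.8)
The plan is to reduce the statement directly to Theorem \ref{thm:norm_2_3sets} by extracting from each body a balanced family of boundary points. First I would apply Proposition \ref{prop:OOC}(3): since $K_i\subset\B^2_2$ and $\cir(K_i)=1$, there exist points $p^i_1,\dots,p^i_{k_i}\in K_i\cap\S^1$ and scalars $\lambda^i_1,\dots,\lambda^i_{k_i}>0$ with $2\leq k_i\leq 3$ such that $0=\sum_{l=1}^{k_i}\lambda^i_lp^i_l$, for each $i\in[3]$. Setting $U_i:=\{p^i_1,\dots,p^i_{k_i}\}$, these are precisely the data required by Theorem \ref{thm:norm_2_3sets} (with all $r_i=1$), and crucially every point of each $U_i$ lies in $K_i$.

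For the inequality, I would argue that no ball of radius smaller than $2$ can contain $K_1+K_2+K_3$. Fix an arbitrary $c\in\R^2$. For every choice of indices $l_1,l_2,l_3$ the vector $p^1_{l_1}+p^2_{l_2}+p^3_{l_3}$ belongs to $K_1+K_2+K_3$, and Theorem \ref{thm:norm_2_3sets} (applied with this $c$) yields indices for which $\|p^1_{l_1}+p^2_{l_2}+p^3_{l_3}-c\|_2\geq 2$. Hence $\max_{x\in K_1+K_2+K_3}\|x-c\|_2\geq 2$ for every centre $c$, and taking the infimum over $c$ gives $\cir(K_1+K_2+K_3)\geq 2$.

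For the equality case I would assume $\cir(K_1+K_2+K_3)=2$ and let $c^*$ denote the corresponding circumcentre, so that $K_1+K_2+K_3\subset c^*+2\B^2_2$. On the one hand, circumscription forces $\|p^1_{l_1}+p^2_{l_2}+p^3_{l_3}-c^*\|_2\leq 2$ for all index choices, hence $\max_{l_1,l_2,l_3}\|p^1_{l_1}+p^2_{l_2}+p^3_{l_3}-c^*\|_2\leq 2$. On the other hand, Theorem \ref{thm:norm_2_3sets} gives the reverse inequality, so this maximum equals exactly $2$. Invoking the equality characterization of Theorem \ref{thm:norm_2_3sets} then forces $c^*=0$ and, after a suitable common rotation, $U_i=\{\pm(\cos(i\pi/3),\sin(i\pi/3))\}$ for $i\in[3]$. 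Since both antipodal points of each $U_i$ lie in $K_i$, convexity of $K_i$ yields the segment $[(\cos(i\pi/3),\sin(i\pi/3)),-(\cos(i\pi/3),\sin(i\pi/3))]\subset K_i$, as claimed.

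The main subtlety to watch is the extraction step: the balanced family $U_i$ produced by Proposition \ref{prop:OOC}(3) need not be unique, so one must ensure that whatever valid choice is made, equality in the corollary forces that particular choice into the extremal configuration of Theorem \ref{thm:norm_2_3sets}. This is exactly what the sandwiching of the maximum between $\leq 2$ (circumscription) and $\geq 2$ (the theorem) guarantees, so the rotation and the antipodal structure are genuinely imposed on the chosen $p^i_l$, and hence the segments are forced to lie inside the $K_i$.
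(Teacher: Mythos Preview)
Your proposal is correct and follows essentially the same approach as the paper: extract balanced families $U_i\subset K_i\cap\S^1$ via Proposition~\ref{prop:OOC}(3) and feed them into Theorem~\ref{thm:norm_2_3sets}, then read off both the inequality and the equality characterization. Your treatment of the equality case via the sandwiching argument at the circumcentre $c^*$ is in fact more explicit than the paper's, which simply asserts that equality for the $K_i$ forces equality for the $U_i$ in Theorem~\ref{thm:norm_2_3sets}; the parenthetical ``with all $r_i=1$'' is a harmless slip in notation (Theorem~\ref{thm:norm_2_3sets} is already stated for $\S^1$).
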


     Very recently, special attention has been paid to the radii functionals measured with respect to an arbitrary
     $C\in\K^n$ (cf.~\cite{BrGo17,BrKo}). This motivated us to study the behavior of the
     outer radius $\cir(\cdot,C)$ of the sum of a finite amount of convex bodies. For $K_1,\dots,K_m\subset\R^n$,
     let $K_1+\cdots+\widehat{K_i}+\cdots+K_m$ be the sum of all sets $K_j$ such that $j\in[m]\setminus\{i\}$.

     \begin{theorem}\label{thm:CircAdditionC}
     Let $K_i,C\in\K^n$, $i \in [j]$, $j\in\N$. Then
		\begin{equation} \label{eq:equality}
			\cir(K_1,C)+\cdots+\cir(K_j,C)\leq j\,\cir(K_1+\cdots+K_j,C).
		\end{equation}

        Equality holds if and only if there exist polyhedral cylinders
        $C_i$ with facets parallel to
        $\aff(K_1+\cdots+\widehat{K_i}+\cdots+K_j)$ and some $\lambda>0$ and $z\in\R^n$ such that
        \[
        K_1+\cdots+K_j\subset z+\lambda C\subset C_1\cap\cdots\cap C_j
        \]
        with $\cir(K_1,C_1)=\cdots=\cir(K_j,C_j)=1$.
     \end{theorem}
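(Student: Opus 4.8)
The plan is to treat the inequality and its equality case separately: the inequality follows from an elementary translation argument, and the equality analysis is converted into the stated cylinder condition through a contact-point computation based on Proposition \ref{prop:OOC}. Write $M := K_1 + \cdots + K_j$ and $\rho := \cir(M,C)$, fix an optimal center so that $M \subseteq z + \rho C$, and set $S_i := K_1 + \cdots + \widehat{K_i} + \cdots + K_j$ together with the linear space $W_i := \aff(S_i) - \aff(S_i)$ parallel to $\aff(S_i)$. For each $i$ and any points $x_l \in K_l$ ($l \ne i$), the inclusion $K_i + \sum_{l\ne i} x_l \subseteq M \subseteq z + \rho C$ gives $K_i \subseteq (z - \sum_{l\ne i} x_l) + \rho C$, hence $\cir(K_i,C) \le \rho$; summing over $i \in [j]$ yields \eqref{eq:equality}. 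Since each of the $j$ summands is at most $\rho$, equality holds in \eqref{eq:equality} if and only if $\cir(K_i,C) = \rho$ for every $i$, and it is this reformulation that I will match to the cylinder condition with $\lambda := \rho$.

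The \emph{if} direction will be short. Given cylinders $C_i \supseteq z + \lambda C$ with $\cir(K_i,C_i)=1$, the inclusion $z + \lambda C \subseteq C_i$ gives $C \subseteq \tfrac1\lambda(C_i - z)$; therefore any homothet $y + \mu C \supseteq K_i$ with $\mu < \lambda$ would yield the homothet $(y - \tfrac\mu\lambda z) + \tfrac\mu\lambda C_i \supseteq K_i$ of ratio $\tfrac\mu\lambda < 1$, contradicting $\cir(K_i,C_i)=1$. Thus $\cir(K_i,C) \ge \lambda$, and combined with the inequality above and $\cir(M,C) \le \lambda$ (from $M \subseteq z + \lambda C$) this forces $\cir(K_i,C) = \cir(M,C)$ for all $i$, i.e. equality.

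The \emph{only if} direction is the heart of the argument. Assuming $\cir(K_i,C)=\rho$ for every $i$, I will construct the cylinders one index at a time. Fix $i$ and choose $b$ in the relative interior of $S_i$. Since $K_i + b \subseteq M \subseteq z + \rho C$ and $\cir(K_i + b, C) = \cir(K_i,C) = \rho$, the set $z + \rho C$ is a minimal circumscribed homothet of $K_i + b$, so Proposition \ref{prop:OOC} supplies contact points $p^t \in (K_i + b)\cap\bd(z + \rho C)$, outer normals $u^t$ to $z + \rho C$ at $p^t$, and scalars $\mu_t > 0$ with $\sum_t \mu_t u^t = 0$. Pinching $h_{K_i+b}(u^t) = h_M(u^t) = h_{z+\rho C}(u^t)$ gives $\langle b, u^t\rangle = h_{S_i}(u^t)$, so the linear functional $\langle \cdot, u^t\rangle$ attains its maximum over $S_i$ at the relative interior point $b$; hence it is constant on $S_i$, which means precisely $u^t \perp W_i$. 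I then define $C_i := \bigcap_t \{x : \langle x, u^t\rangle \le \langle z, u^t\rangle + \rho\, h_C(u^t)\}$. As every $u^t \in W_i^\perp$, this $C_i$ is a polyhedral cylinder whose facets are parallel to $\aff(S_i)$; it contains $z + \lambda C$ by construction, hence contains $M$; and since $K_i + b \subseteq C_i$ touches $\bd(C_i)$ exactly at the points $p^t$ with balanced outer normals $u^t$, Proposition \ref{prop:OOC}(2) gives $\cir(K_i,C_i) = \cir(K_i + b, C_i) = 1$. Carrying this out for every $i$ produces cylinders with $M \subseteq z + \lambda C \subseteq C_1 \cap \cdots \cap C_j$, as required.

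The main obstacle is exactly this orthogonality step, and the device that resolves it is the choice of $b$ in the relative interior of $S_i$: it collapses all contact directions of $K_i + b$ onto $W_i^\perp$, which is what lets the supporting halfspaces of $z + \lambda C$ assemble into a cylinder with the prescribed facet directions. The only remaining care is the degenerate case $S_i = \{b\}$ (so $W_i = \{0\}$), where $C_i$ is simply a bounded polytope circumscribing $K_i$ and the same reasoning applies verbatim.
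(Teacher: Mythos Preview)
Your proof is correct, and its overall architecture matches the paper's: both derive the inequality from $\cir(K_i,C)\le\cir(K_1+\cdots+K_j,C)$, both reduce the equality case to $\cir(K_i,C)=\rho$ for all $i$, both invoke Proposition~\ref{prop:OOC} to produce balanced contact normals, and both assemble the cylinders $C_i$ as intersections of the corresponding supporting halfspaces.

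The genuine difference lies in how you establish the orthogonality $u^t\perp W_i$. The paper translates each $K_i$ separately into $C$, takes the contact normals $a_i^l$ there, and then argues by contradiction: if some segment $[x,y]\subset S_i$ were not contained (up to translation) in $H_i^\perp=\lin\{a_i^l\}^\perp$, one shows $[x,y]+K_i\not\subset_t C$, contradicting $K_1+\cdots+K_j\subset z_0+C$. Your device is instead to pick $b\in\relint(S_i)$ and take contact points of $K_i+b$ with the \emph{same} circumscribing copy $z+\rho C$ that contains $M$; the chain $h_{K_i+b}(u^t)=h_M(u^t)=h_{z+\rho C}(u^t)$ then yields $\langle b,u^t\rangle=h_{S_i}(u^t)$ directly, and the relative-interior choice forces $u^t\in W_i^\perp$. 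This is a cleaner, contradiction-free route and has the pleasant side effect that $z+\rho C\subset C_i$ is automatic from your definition of $C_i$, whereas the paper has to track a separate translation. The paper's argument, on the other hand, is slightly more elementary in that it never writes down support functions. Both approaches silently apply Proposition~\ref{prop:OOC} to the unbounded $C_i$, which is harmless since the implication $(2)\Rightarrow(1)$ there does not use compactness.
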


     Before going on, we will now explain the notation used in the remaining sections of the paper.
     For any $A,B\subset\R^n$, we write $A\bot B$ if $A$ and $B$ are mutually orthogonal.
     For any set $A\subset\R^n$, we denote by $\conv(A)$, $\lin(A)$, and $\aff(K)$, the
     \emph{convex}, \emph{linear}, and \emph{affine hull} of $A$, respectively. We denote
     by $\dim(A)$ the \emph{dimension} of $A$, and it is defined as $\dim(A):=\dim(\aff(A))$.
     For any $A,B\subset\R^n$, we say that $A\subset_tB$ (resp.~$A\not\subset_tB$) if there exists
     $x\in\R^n$ such that $A\subset x+B$ (resp.~$A\not\subset x+B$ for every $x \in \R^n$).

	\section{Euclidean case}
	
    We start this section proving Theorem \ref{thm:norm_2}.

	\begin{proof}[Proof of Theorem \ref{thm:norm_2}]
		Let $\lambda^i_1,\dots,\lambda^i_{k_i}>0$ be such that
		$0=\sum_{l=1}^{k_i}\lambda^i_lu_l^i$ for all $i \in [j]$. Considering the scalar product
		\[
		\left(\sum_{l=1}^{k_1}\lambda^1_lu_l^1\right)^Tc=\sum_{l=1}^{k_1}\lambda^1_l(u^1_l)^Tc=0,
		\]
		then there exists $m_1\in[k_1]$ such that $(u^1_{m_1})^Tc\leq 0$.
		Secondly, and again due to
		\[
		\left(\sum_{l=1}^{k_2}\lambda^2_lu_l^2\right)^T(u^1_{m_1}-c)=\sum_{l=1}^{k_2}\lambda^2_l(u^2_l)^T(u^1_{m_1}-c)=0
		\]
		there exists $m_2\in[k_2]$ such that $(u^2_{m_2})^T(u^1_{m_1}-c)\geq 0$.
		In general and because of the same reason, for every $t=3,\dots,j$, we choose $m_t\in[k_t]$ such that
		\[
		(u^t_{m_t})^T(u^1_{m_1}+\cdots+u^{t-1}_{m_{t-1}}-c)\geq 0.
		\]
		Hence, for every $t=2,\dots,j$, we have that
		\[
		\begin{split}
		\norm{u^1_{m_1} & +\cdots+u^t_{m_t}-c}_2^2 \\
& =\norm{u^1_{m_1}+\cdots+u^{t-1}_{m_{t-1}}-c}_2^2+2(u^1_{m_1}+\cdots+u^{t-1}_{m_{t-1}}-c)^T(u^t_{m_t})+\norm{u^t_{m_t}}_2^2 \\
		& \geq \norm{u^1_{m_1}+\cdots+u^{t-1}_{m_{t-1}}-c}_2^2+\norm{u^t_{m_t}}_2^2,
		\end{split}
		\]
		and thus
		\[
		\begin{split}
		\norm{u^1_{m_1}+\cdots+u^j_{m_j}-c}_2^2& \geq\norm{c}_2^2+\norm{u^1_{m_1}}_2^2+\cdots+\norm{u^j_{m_j}}_2^2\\
		& \geq\norm{u^1_{m_1}}_2^2+\cdots+\norm{u^j_{m_j}}_2^2.
		\end{split}
		\]
		Therefore
		\[
		\begin{split}
		\max_{l_1,\dots,l_j}\norm{u^1_{l_1}+\cdots+u^j_{l_j}-c}_2^2 & \geq \norm{u^1_{m_1}+\cdots+u^j_{m_j}-c}_2^2\\
		& \geq \norm{u^1_{m_1}}_2^2+\cdots+\norm{u^j_{m_j}}_2^2 \\
		& = r_1^2+\cdots+r_j^2.
		\end{split}
		\]
		
		We now show the equality case. Since we must have equality in all inequalities above,
		we start observing that $\norm{c}_2=0$ implies $c=0$. Moreover, we also have that
		\[
		(u^t_{l_t})^T(u^1_{l_1}+\cdots+u^{t-1}_{l_{t-1}})=0
		\]
		for every $t=2,\dots,j$, and every $l_t \in [k_t]$. This gives,
		for $t=2$, that $(u^2_{l_2})^Tu^1_{l_1}=0$ for every $l_1,l_2$, and thus
		$U_1\bot U_2$. For $t=3$, we get $(u^3_{l_3})^T(u^1_{l_1}+u^{2}_{l_2})=0$ for every
		$l_1,l_2,l_3$. Clearly
		\[
		\mathrm{lin}(\{u^1_{l_1}+u^2_{l_2}:1\leq l_i\leq k_i\})=\mathrm{lin}(U_1\cup U_2) = \mathrm{lin}(U_1) +  \mathrm{lin}(U_2)
		\]
		and thus $U_3\bot (U_1 + U_2)$. Analogously, we obtain that for every $t=2,\dots,j$
		then $U_t\bot(U_1+\cdots + U_{t-1})$, i.e., $U_i\bot U_l$ for every $1\leq i<l\leq j$.
		Finally, we also observe that $U_i\bot U_l$ for every $1\leq i<l\leq j$
		implies that
		\[
		\mathrm{dim}(U_1)+\cdots+\mathrm{dim}(U_j)=\mathrm{dim}(U_1+\cdots+U_j)\leq n
		\]
		which, since $\mathrm{dim}(U_i)\geq 1$ for $i\in [j]$, hence $j\leq n$, concludes the equality case.
	\end{proof}
	
    Making use of Theorem \ref{thm:norm_2}, we are able to show the following result, which is a nonlinear
    sharp bound explaining the behavior of the Euclidean circumradius with respect to the Minkowski addition of convex sets.

	\begin{theorem}\label{thrm:radii_2}
		Let $K_i\in\K^n$, $i\in[j]$, $j\in\mathbb N$. Then
		\[
		\cir(K_1)^2+\cdots+\cir(K_j)^2\leq\cir(K_1+\cdots+K_j)^2.
		\]
        Equality holds if $K_k$ and $K_l$ are mutually orthogonal,
        for any choice $1\leq k<l\leq j$.		
	\end{theorem}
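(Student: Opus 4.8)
The plan is to combine the optimal-containment characterization of Proposition \ref{prop:OOC}(3) with the additive estimate of Theorem \ref{thm:norm_2}, the crucial idea being to let the circumcenter of the Minkowski sum play the role of the free vector $c$ appearing in Theorem \ref{thm:norm_2}.

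First I would reduce to the case $\cir(K_i) > 0$ for all $i$, since any summand that is a single point contributes a zero term on both sides and merely translates the sum. Writing $R_i := \cir(K_i)$, I translate each $K_i$ so that its circumball is $R_i\B^n_2$, i.e.\ $K_i \subset R_i\B^n_2$ with smallest enclosing ball centered at the origin; this is harmless because every quantity involved is invariant under translating the summands. Then $\tfrac{1}{R_i}K_i \subset \B^n_2$ has circumradius $1$, so Proposition \ref{prop:OOC}(3) yields contact points $p^i_l \in K_i \cap R_i\S^{n-1}$ and scalars $\mu^i_l > 0$, $2 \le k_i \le n+1$, with $0 = \sum_l \mu^i_l\, p^i_l$ for each $i \in [j]$.

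Next, let $c \in \R^n$ be the circumcenter of $S := K_1 + \cdots + K_j$, so that $S \subset c + \cir(S)\,\B^n_2$. Since $p^i_{l_i} \in K_i$, every vector $p^1_{l_1}+\cdots+p^j_{l_j}$ lies in $S$, and hence
\[
\cir(S) \;\geq\; \max_{l_1,\dots,l_j}\bigl\|p^1_{l_1}+\cdots+p^j_{l_j}-c\bigr\|_2.
\]
Applying Theorem \ref{thm:norm_2} to the sets $U_i = \{p^i_1,\dots,p^i_{k_i}\} \subset R_i\S^{n-1}$ (so $r_i = R_i$) with this particular $c$ bounds the right-hand side below by $\sqrt{R_1^2+\cdots+R_j^2}$. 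Squaring gives the claimed inequality.

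For the equality (if-)direction, assume the $K_i$ are mutually orthogonal for every pair $1 \le k < l \le j$; then $\lin(K_k)\bot\lin(K_l)$ for $k\neq l$, so the subspaces $\lin(K_i)$ form an orthogonal direct sum. Let $z_i \in \aff(K_i) \subset \lin(K_i)$ be the circumcenter of $K_i$ and set $z := z_1 + \cdots + z_j$. For $y = y_1 + \cdots + y_j \in S$ with $y_i \in K_i$, the differences $y_i - z_i \in \lin(K_i)$ are mutually orthogonal, whence $\|y-z\|_2^2 = \sum_i \|y_i-z_i\|_2^2 \le \sum_i R_i^2$; thus $\cir(S) \le \sqrt{R_1^2+\cdots+R_j^2}$, matching the lower bound. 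I expect the only genuinely substantive step to be the identification of the circumcenter of the sum as the correct $c$ in Theorem \ref{thm:norm_2}; once that insight is in place the inequality is a packaging of the two cited results, and the equality case reduces to a Pythagorean splitting across orthogonal subspaces.
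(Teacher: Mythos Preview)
Your proof is correct and follows the paper's approach: translate each $K_i$ so that its circumball is centred at the origin, extract contact points via Proposition~\ref{prop:OOC}(3), and feed them into Theorem~\ref{thm:norm_2}; the only cosmetic difference is that you fix $c$ as the circumcenter of the sum, whereas the paper passes through $\cir(U_1+\cdots+U_j)=\min_{c}\max_{l_1,\dots,l_j}\|u^1_{l_1}+\cdots+u^j_{l_j}-c\|_2$. Your Pythagorean argument for the equality case is in fact more explicit than the paper's, which simply invokes the equality clause of Theorem~\ref{thm:norm_2} without separately verifying the upper bound $\cir(K_1+\cdots+K_j)\le\sqrt{R_1^2+\cdots+R_j^2}$.
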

	
	\begin{proof}
		Let $c_i\in K_i$ be such that $K_i\subset c_i+\cir(K_i)\B^n_2$, $i\in[j]$, and define
		$K_i':=K_i-c_i$. Letting $r_i:=\cir(K_i)$ for $i\in[j]$, by Proposition \ref{prop:OOC}, there exist
		\[
		u_1^i,\dots,u_{k_i}^i\in r_i\mathbb S^{n-1}\cap K_i',
		\]
		such that $0\in\conv(\{u_1^i,\dots,u_{k_i}^i\})$. Let us denote by $U_i:=\{u_1^i,\dots,u_{k_i}^i\}$
		for every $i\in[j]$. By Theorem \ref{thm:norm_2} we then have that
		\[
		\begin{split}
		\cir(K_1+\cdots+K_j) & \geq \cir(\conv(U_1)+\cdots+\conv(U_j))\\
        & \geq \cir(U_1+\cdots+U_j)\\
		& = \min_{c\in\R^n}\max_{l_1,\dots,l_j}\left\|u^1_{l_1}+\cdots+u^j_{l_j}-c\right\|_2 \\
		& \geq\sqrt{r_1^2+\cdots+r_j^2}.
		\end{split}
		\]
        For the equality case, let us observe that if $K_k$ and $K_l$ are mutually orthogonal, for any choice
        $1\leq k<l\leq j$, then $U_k$ and $U_l$ are mutually orthogonal as well, for any $1\leq k<l\leq j$,
        and thus the equality cases of Theorem \ref{thm:norm_2} implies the result.

	\end{proof}
	
	\begin{proof}[Proof of Theorem \ref{thm:radii_addition_2}]
		Applying H\"older's inequality to $\left(\cir(K_1),\dots,\cir(K_j)\right)^T$
		and $(1,\dots,1)^T \in \mathbb{R}^j$ yields
		\begin{align*}
		\sum_{i=1}^j\cir(K_i)
		&= \norm{(\cir(K_1),\dots,\cir(K_j))^T}_1 \\
		&\le \norm{(1,\dots,1)^T}_{2}  \; \norm{(\cir(K_1),\dots,\cir(K_j))^T}_2 \\
		&= j^{\frac{1}{2}}  \norm{(\cir(K_1),\dots,\cir(K_j))^T}_2 \\
		&\le j^{\frac{1}{2}}  \cir(K_1+\cdots+K_j)
		\end{align*}
		where the last inequality follows from Theorem \ref{thrm:radii_2}.
		
        Let us assume that we have equality. Then we have equality in Theorem \ref{thrm:radii_2}. Moreover,
        we have equality in H\"older's inequality applied to $\left(\cir(K_1),\dots,\cir(K_j)\right)^T$
		and $(1,\dots,1)^T$, hence $\cir(K_1)=\cdots=\cir(K_j)$, concluding the proof.

	\end{proof}

\begin{figure}
	\centering
	\begin{tikzpicture}[>=latex]
	
	\def\radius{3}
	
	\foreach \ang/\lab/\dir in {
		0/0/right,
		1/{\pi/6}/{right},
		2/{2\pi/6}/{above},
		3/{3\pi/6}/{above},
		4/{4\pi/6}/{above},
		5/{5\pi/6}/{left},
		6/{6\pi/6}/{left},
		7/{7\pi/6}/{left},
		8/{8\pi/6}/{below},
		9/{9\pi/6}/{below},
		10/{10\pi/6}/{below},
		11/{11\pi/6}/{right}}
	{
		\draw[lightgray] (0,0) -- (\ang * 180 / 6:\radius + 0.1);
		\node [fill=white] at (\ang * 180 / 6:\radius + 0.2) [\dir] {$\lab$};
	}
	
	\draw [] (0,0) circle (\radius);
	
	\draw[->, line width = 1] (0,0) -- node[fill=white] {$e_2$} (90:\radius) ;

	\draw [red,thick,domain=60:120] plot ({\radius*cos(\x)}, {\radius*sin(\x)});
	\node [red] at (90:\radius+1) {$B(e_2,\pi/6)$};
	
	\draw [black!50!green,thick,domain=0:60] plot ({\radius*cos(\x)}, {\radius*sin(\x)});
	\node [black!50!green] at (30:\radius+2) {$B((\sqrt{3}/2,1/2),\pi/6)$};
	
	\draw [blue,thick,domain=120:180] plot ({\radius*cos(\x)}, {\radius*sin(\x)});
	\node [blue] at (150:\radius+2) {$B((-\sqrt{3}/2,1/2),\pi/6)$};

	\end{tikzpicture}
	\caption{Arcs  $B(e_2,\pi/6)$,
		$B((-\sqrt{3}/2,1/2),\pi/6)$ and $B((\sqrt{3}/2,1/2),\pi/6)$ in the proof of Theorem \ref{thm:norm_2_3sets}.}
	\label{fig:angleregions}
\end{figure}
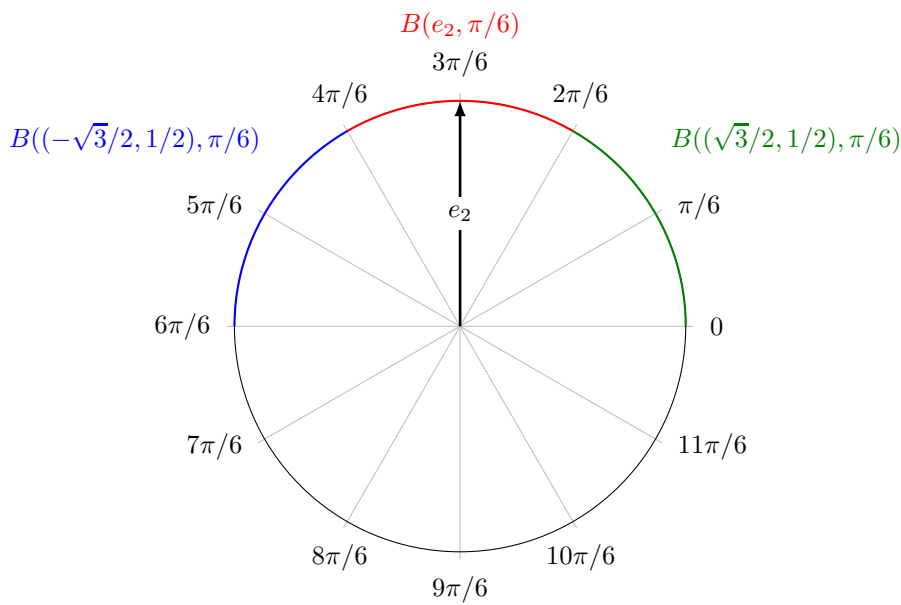

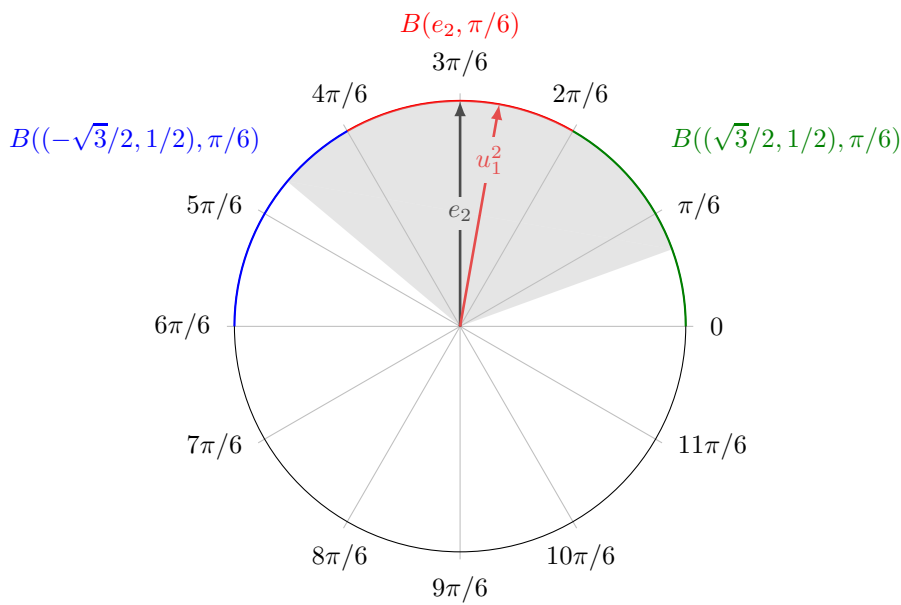
\begin{figure}
	\centering
	\begin{tikzpicture}[>=latex]
	
	\def\radius{3}
	
	\foreach \ang/\lab/\dir in {
		0/0/right,
		1/{\pi/6}/{right},
		2/{2\pi/6}/{above},
		3/{3\pi/6}/{above},
		4/{4\pi/6}/{above},
		5/{5\pi/6}/{left},
		6/{6\pi/6}/{left},
		7/{7\pi/6}/{left},
		8/{8\pi/6}/{below},
		9/{9\pi/6}/{below},
		10/{10\pi/6}/{below},
		11/{11\pi/6}/{right}}
	{
		\draw[lightgray] (0,0) -- (\ang * 180 / 6:\radius + 0.1);
		\node [fill=white] at (\ang * 180 / 6:\radius + 0.2) [\dir] {$\lab$};
	}
	
	\draw [] (0,0) circle (\radius);
	
	\draw[->, line width = 1] (0,0) -- node[fill=white] {$e_2$} (90:\radius) ;

	\draw [red,thick,domain=60:120] plot ({\radius*cos(\x)}, {\radius*sin(\x)});
	\node [red] at (90:\radius+1) {$B(e_2,\pi/6)$};
	\draw[->, red, line width = 1] (0,0) -- node[fill=white, near end] {$u_1^2$} (80:\radius) ;
	\fill [lightgray,thick,domain=20:140, opacity = 0.4] plot ({\radius*cos(\x)}, {\radius*sin(\x)});
	\fill [lightgray,thick,opacity = 0.4] (0,0) -- (20:\radius) -- (140:\radius) -- (0,0);
	
	\draw [black!50!green,thick,domain=0:60] plot ({\radius*cos(\x)}, {\radius*sin(\x)});
	\node [black!50!green] at (30:\radius+2) {$B((\sqrt{3}/2,1/2),\pi/6)$};
	
	\draw [blue,thick,domain=120:180] plot ({\radius*cos(\x)}, {\radius*sin(\x)});
	\node [blue] at (150:\radius+2) {$B((-\sqrt{3}/2,1/2),\pi/6)$};

	\end{tikzpicture}
	\caption{All vectors in the gray region, together with $u_1^2$, fulfill the requirements of (I).}
	\label{fig:angleregions2}
\end{figure}

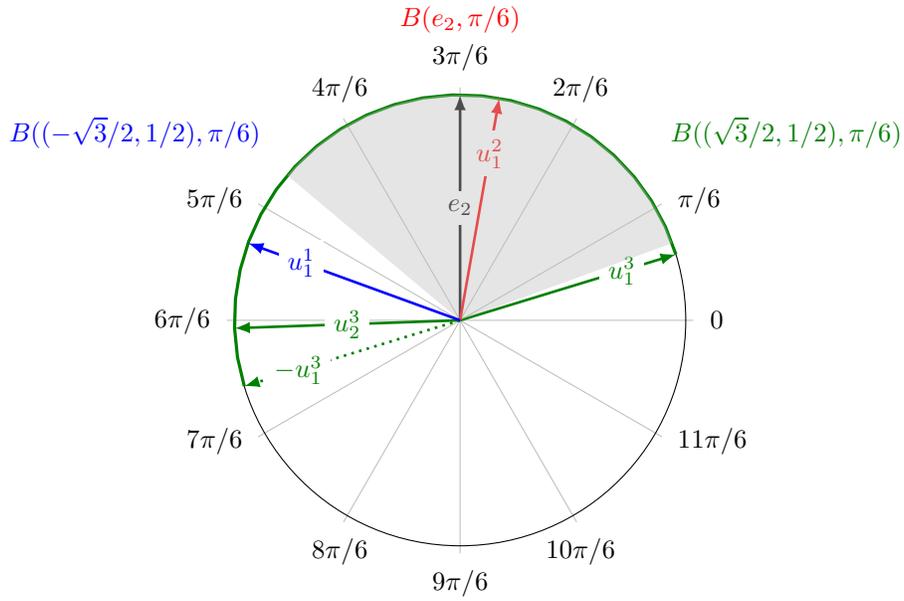
\begin{figure}
	\centering
	\begin{tikzpicture}[>=latex]
	
	\def\radius{3}
	
	\foreach \ang/\lab/\dir in {
		0/0/right,
		1/{\pi/6}/{right},
		2/{2\pi/6}/{above},
		3/{3\pi/6}/{above},
		4/{4\pi/6}/{above},
		5/{5\pi/6}/{left},
		6/{6\pi/6}/{left},
		7/{7\pi/6}/{left},
		8/{8\pi/6}/{below},
		9/{9\pi/6}/{below},
		10/{10\pi/6}/{below},
		11/{11\pi/6}/{right}}
	{
		\draw[lightgray] (0,0) -- (\ang * 180 / 6:\radius + 0.1);
		\node [fill=white] at (\ang * 180 / 6:\radius + 0.2) [\dir] {$\lab$};
	}
	
	\draw [] (0,0) circle (\radius);
	
	\draw[->, line width = 1] (0,0) -- node[fill=white] {$e_2$} (90:\radius) ;
	
	\draw[->, black!50!green, line width = 1] (0,0) -- node[fill=white, near end] {$u_1^3$} (17:\radius);
	\draw[->, dotted, black!50!green, line width = 1] (0,0) -- node[fill=white, near end] {$-u_1^3$} (197:\radius);
	\draw[->, black!50!green, line width = 1] (0,0) -- node[fill=white] {$u_2^3$} (182:\radius);
	\node [black!50!green] at (30:\radius+2) {$B((\sqrt{3}/2,1/2),\pi/6)$};
	\draw [black!50!green,very thick,domain=17:197] plot ({\radius*cos(\x)}, {\radius*sin(\x)});
	
	\node [red] at (90:\radius+1) {$B(e_2,\pi/6)$};
	\draw[->, red, line width = 1] (0,0) -- node[fill=white, near end] {$u_1^2$} (80:\radius) ;
	\fill [lightgray,thick,domain=20:140, opacity = 0.4] plot ({\radius*cos(\x)}, {\radius*sin(\x)});
	\fill [lightgray,thick,opacity = 0.4] (0,0) -- (20:\radius) -- (140:\radius) -- (0,0);

	\draw[->, blue, line width = 1] (0,0) -- node[fill=white, near end] {$u_1^1$} (160:\radius) ;
	\node [blue] at (150:\radius+2) {$B((-\sqrt{3}/2,1/2),\pi/6)$};

	\end{tikzpicture}
	\caption{Since $0$ is contained in the convex hull of $U^3$ there must be a point $u_2^3$ on the green arc.}
	\label{fig:angleregions3}
\end{figure}

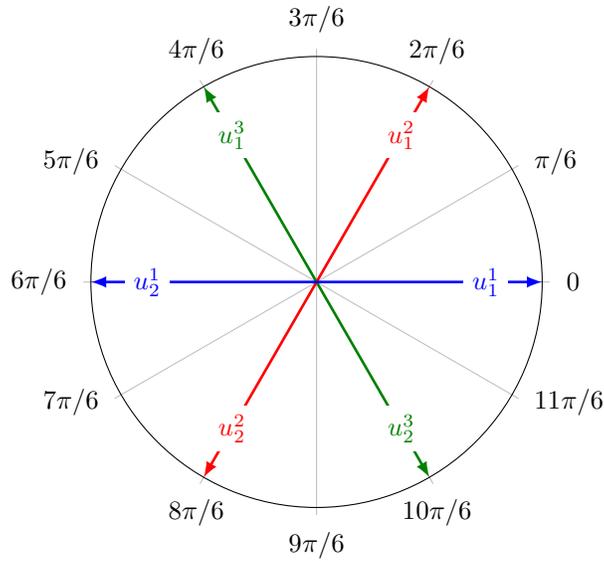
\begin{figure}
	\centering
	\begin{tikzpicture}[>=latex]
	
	\def\radius{3}
	
	\foreach \ang/\lab/\dir in {
		0/0/right,
		1/{\pi/6}/{right},
		2/{2\pi/6}/{above},
		3/{3\pi/6}/{above},
		4/{4\pi/6}/{above},
		5/{5\pi/6}/{left},
		6/{6\pi/6}/{left},
		7/{7\pi/6}/{left},
		8/{8\pi/6}/{below},
		9/{9\pi/6}/{below},
		10/{10\pi/6}/{below},
		11/{11\pi/6}/{right}}
	{
		\draw[lightgray] (0,0) -- (\ang * 180 / 6:\radius + 0.1);
		\node [fill=white] at (\ang * 180 / 6:\radius + 0.2) [\dir] {$\lab$};
	}
	
	\draw [] (0,0) circle (\radius);

	\draw[->, black!50!green, line width = 1] (0,0) -- node[fill=white, near end] {$u_1^3$} (120:\radius);
	\draw[->, black!50!green, line width = 1] (0,0) -- node[fill=white, near end] {$u_2^3$} (300:\radius);

	\draw[->, red, line width = 1] (0,0) -- node[fill=white, near end] {$u_1^2$} (60:\radius) ;
	\draw[->, red, line width = 1] (0,0) -- node[fill=white, near end] {$u_2^2$} (240:\radius) ;

	\draw[->, blue, line width = 1] (0,0) -- node[fill=white, near end] {$u_1^1$} (0:\radius) ;
	\draw[->, blue, line width = 1] (0,0) -- node[fill=white, near end] {$u_2^1$} (180:\radius) ;

	\end{tikzpicture}
	\caption{Equality case in the proof of Theorem \ref{thm:norm_2_3sets} up to a rotation.}
	\label{fig:angleregions4}
\end{figure}

\begin{proof}[Proof of Theorem \ref{thm:norm_2_3sets}]
  We start by showing the following claim.\\
  \emph{Claim.} There exist two points, each from a different group
	of points $U_i$, $i \in [3]$, say w.l.o.g.~$u^1_1$ and $u^2_1$,
	such that
	\begin{equation*}
	(u^1_1+u^2_1)^T(-c)\geq 0\quad\text{and}\quad (u^1_1)^Tu^2_1\geq\frac12,
	\end{equation*}
  cf.~Figure \ref{fig:angleregions3}. Geometrically speaking, the claim states that $(u^1_1+u^2_1)$ has a positive component in the direction of $-c$ and that the angle between $u^1_1$ and $u^2_1$ is no greater than $\frac{\pi}{3}$.

  Before proving claim, let us observe that if the claim holds true,
  since $0=\sum_{l=1}^{k_3}\lambda^3_lu^3_l$,
	then $0=\sum_{l=1}^{k_3}\lambda^3_l(u^1_1+u^2_1-c)^Tu^3_l$. From this we deduce that
	there exists an index $l$, say $l=1$,
	such that $(u^1_1+u^2_1-c)^Tu^3_1\geq 0$. Hence
	\begin{equation}
	\begin{alignedat}{3}
	& &&\|u^1_1+u^2_1+u^3_1-c\|_2^2 \\
	&= &&\|u^1_1\|_2^2 + \|u^2_1\|_2^2  + 2(u^1_1)^Tu^2_1 + \|c\|_2^2 + 2(u^1_1+u^2_1)^T(-c) \\
	&& + &\|u^3_1\|_2^2 +  2(u^1_1+u^2_1-c)^Tu^3_1\\
	& \geq && 1  +  1 + 2\frac12  +  0  +  0  +  1  +  0=4,
	\end{alignedat}
    \label{eq:ineq_case}
	\end{equation}
	concluding the proof of the inequality.
	Hence, we just need to show the claim.

  \begin{proof}[Proof of Claim]
  After a suitable rotation, we can assume
	that $-c=\lambda e_2$, for some $\lambda\geq0$. For the sake of clearness,
	we use the notation $B(u,\alpha):=\{x\in\mathbb S^1:\arccos(u^Tx)\leq\alpha\}$, where $u\in\mathbb S^1$ and $\alpha\in[0,\pi]$. %
  \footnote{Observe that for $u \in \mathbb S^1$, $B(u,\alpha)$ is an arc of $\mathbb S^1$ and contains all vectors that span an angle of no more than $\alpha$ with $u$.}
	We first notice that if there exist two points from different sets of vectors $U_1$, $U_2$, $U_3$,
	namely $u^1_1$ and $u^2_1$, such that $u^1_1,u^2_1\in B(e_2,\pi/6)$ or
	$u^1_1,u^2_1\in B((-\sqrt{3}/2,1/2),\pi/6)$ or $u^1_1,u^2_1\in B((\sqrt{3}/2,1/2),\pi/6)$ then we
	have that
	\[
	(u^1_1+u^2_1)^Te_2=(u^1_1)^Te_2+(u^2_1)^Te_2\geq 0+0=0\quad\text{and}\quad \arccos((u^1_1)^Tu^2_1)\leq\frac\pi3,
	\]
	i.e., $(u^1_1)^Tu^2_1\geq \frac12$, as desired (see Figure \ref{fig:angleregions} for the corresponding arcs).
	
  Let us suppose this were not the case. Since $B(e_2,\pi/6)$
	and $B((\pm\sqrt{3}/2,1/2),\pi/6)$ cover $C:=\{x\in\s^1:x_2\geq 0\}$,
	and $U_1,U_2$, and $U_3$ each intersect with
	$C$ (cf.~Proposition \ref{prop:OOC}(4)), we can suppose w.l.o.g.~that
	$U_1\cap C\subset B((-\sqrt{3}/2,1/2),\pi/6)$,
	$U_2\cap C\subset B((0,1),\pi/6)$, and
	$U_3\cap C\subset B((\sqrt{3}/2,1/2),\pi/6)$.
	Let us also suppose that $u^1_1\in B((-\sqrt{3}/2,1/2),\pi/6)$,
	$u^2_1\in B((e_2,\pi/6)$, and $u^3_1\in B((\sqrt{3}/2,1/2),\pi/6)$.
	Moreover, we can also suppose that $u^1_1$ (resp.~$u^3_1$)
	is the point from $U_1$ (resp.~$U_3$) contained in $B((-\sqrt{3}/2,1/2),\pi/6)$
	(resp.~$B((\sqrt{3}/2,1/2),\pi/6)$) with biggest second coordinate.
	Furthermore, we can suppose that $u^1_1,u^3_1\notin B(u^2_1,\pi/3)$, as otherwise
	we would be done by choosing either $u^1_1$ and $u^2_1$ or $u^3_1$ and $u^2_1$ (cf. Figure \ref{fig:angleregions2}).
	Finally, let us also suppose w.l.o.g.~that $(u^1_1)_2\geq (u^3_1)_2$.

  We now make a crucial observation:
	There must exist a point from $U_3$ in the arc of $\mathbb S^1$ containing $e_2$ and
	determined by $u^3_1$ and $-u^3_1$ (cf. green arc in Figure \ref{fig:angleregions3})
  \emph{different} from $u^3_1$. If this were not the case,
	then $0\notin\conv(U_3)$, a contradiction (again, cf.~Proposition \ref{prop:OOC}(4)). Assuming that this second point in this arc is $u^3_2$,
	we furthermore observe that it must necessarily belong to the arc of $\mathbb S^1$ determined by
	$-u^3_1$ and the left-most vertex of $B(u^2_1,\pi/3)$. Since $u^3_1$ lies in $\mathbb S^1$ between
	$e_1$ and the right-most vertex of $B(u^2_1,\pi/3)$, we see that
	the angle between $u^3_2$ (as well as $u^1_1$) and the left-most vertex of $B(u^2_1,\pi/3)$ is at most
	$\pi/3$. Thus we also have that $\arccos((u^3_2)^Tu^1_1)\leq\pi/3$, i.e., $(u^3_2)^Tu^1_1\geq 1/2$. Moreover,
	since $(u^3_2)_2\geq (-u^3_1)_2$ and $(u^1_1)_2\geq (u^3_1)_2$, we conclude that $(u^1_1+u^3_2)^Te_2=(u^1_1)_2+(u^3_2)_2\geq 0$, concluding the assertion
	for the choice $u^1_1$ and $u^3_2$. This concludes the proof of the claim, possibly changing naming between $U_2$ and $U_3$.
  \end{proof}
	
	For the equality case, we should have equality in all the inequalities above, in particular in (\ref{eq:ineq_case}), assuming w.l.o.g. that $l_1=l_2=l_3=1$. This means that
	$c=0$, $(u^1_1)^Tu^2_1=1/2$ (i.e.~$\arccos((u^1_1)^Tu^2_1)=\pi/3$), and $(u^1_1+u^2_1)^Tu^3_1=0$. If $u^1_1, u^2_1, u^3_1$ are located within
	$\mathbb S^1$ in clockwise order, then $\arccos((u^2_1)^Tu^3_1)=\pi/3=\arccos((u^1_1)^Tu^2_1)$, i.e., $u^1_1, u^2_1, u^3_1$ are vertices
	of a regular hexagon. Furthermore, the arc determined by $\pm u^3_1$ and containing $u^1_1$, $u^2_1$
	must contain a second point from $U_3$, say $u^3_2$. At the same time this point cannot be in the relative interior
	of this arc (as otherwise we would have that $(u^1_1+u^2_1)^Tu^3_2>0$, a contradiction with (\ref{eq:ineq_case})), hence showing $u^3_2=-u^3_1$.
	Again, we must have points from $U_1$ (say $u^1_2$) and from $U_2$ (say $u^2_2$)
	inside the other arc determined by $\pm u^3_1$.
	Again, for the same reason, $U_i\cap\relint(B(\pm u^3_1,\pi/3))=\emptyset$, $i=1,2$.
	Assuming that $u^3_1=e^1$, Proposition \ref{prop:OOC}(4) implies that $U_i\cap\{x\in\s^1:x_2\leq 0\}\neq\emptyset$, $i=1,2$.
	Then $U_i\cap\{x\in\s^1:x_2\leq-1/2\}\neq\emptyset$, $i=1,2$, namely, $u^i_2\in B(-e_2,\pi/6)$, $i=1,2$.

	Then we have that $(u^1_2)^Tu^2_2\geq 1/2$, with equality (as it actually happens) if and only if $u^1_2=(-1/2,\sqrt{3}/2)=-u^1_1$
	and $u^2_2=(-1/2,-\sqrt{3}/2)=-u^2_1$.
	Moreover, no other points on $U_1$, $U_2$, $U_3$ are allowed so that some of those conditions are violated.
	Therefore, $U_i=\{\pm u^i\}$, $i \in [3]$, where $\arccos((u^i)^Tu^j)=1/2$, for all $1\leq i<j\leq 3$,
	hence concluding the equality case and the theorem (cf. Figure \ref{fig:angleregions4} for a visualization).
\end{proof}

\begin{proof}[Proof of Corollary \ref{cor:radii_2_3sets}]
Since $\cir(K_i)=1$ with $K_i\subset\B^2_2$, let us denote by $u^j_i\in K_j\cap\mathbb S^1$,
for $i,j\in[3]$, the points given in Proposition \ref{prop:OOC}(3).
Denoting by $S_j:=\conv(\{u^j_i:i\in[3]\})\subset K_j$, $j\in[3]$,
due to the convexity of $K_j$ we have that
\[
\cir(K_1+K_2+K_3)\geq\cir(S_1+S_2+S_3).
\]
In order to prove the inequality, we will show that for every $c\in\R^2$,
then $S_1+S_2+S_3\nsubseteq c+\rho\B_2$, for any $\rho<2$. Applying Theorem \ref{thm:norm_2_3sets} to the sets
of vectors $U_j:=\{u^j_i:i\in[3]\}$, $j\in[3]$, we then deduce the existence of
indices $l_1,l_2,l_3\in[3]$, such that $\|u^1_{l_1}+u^2_{l_2}+u^3_{l_3}-c\|_2\geq 2$, hence implying the result.

For the equality case, we obtain in particular that if $K_1$, $K_2$, $K_3$ attain equality above, then the set of vectors
$U_1$, $U_2$, $U_3$ attain equality in Theorem \ref{thm:norm_2_3sets} too, hence obtaining the result.
\end{proof}

	\section{Generalized Minkowski spaces}
	
	\begin{lemma}\label{lem:circSumC}
		Let $K_i,C\in\K^n$, $i\in[j]$, $j\in[n]$. Then
		\[
		\max_{i=1,\dots,j}\cir(K_i,C)\leq\cir(K_1+\cdots+K_j,C).
		\]
		The inequality is sharp.
	\end{lemma}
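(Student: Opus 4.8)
The plan is to reduce the statement to the single inequality $\cir(K_i,C)\le\cir(K_1+\cdots+K_j,C)$ for each fixed $i\in[j]$; the maximum on the left is attained at some index, so the claim follows at once by taking the maximum over $i$. The whole inequality rests on one elementary containment: a suitably translated copy of any single summand already lies inside the Minkowski sum of all of them.

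First I would set $\lambda:=\cir(K_1+\cdots+K_j,C)$ and fix $z\in\R^n$ with $K_1+\cdots+K_j\subseteq z+\lambda C$, which exists by the definition of the circumradius (and is well-defined because $C$ is a full-dimensional convex body). Fixing an index $i$, I would choose representatives $p_k\in K_k$ for every $k\in[j]\setminus\{i\}$, which is possible since each $K_k$ is nonempty and compact. By the definition of Minkowski addition, keeping the coordinates $p_k$ fixed and letting the $i$-th coordinate range over $K_i$ yields
\[
K_i+\sum_{k\neq i}p_k\subseteq K_1+\cdots+K_j\subseteq z+\lambda C.
\]
Translating by $-\sum_{k\neq i}p_k$ then gives $K_i\subseteq\bigl(z-\sum_{k\neq i}p_k\bigr)+\lambda C$, whence $\cir(K_i,C)\le\lambda$. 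Since $i\in[j]$ was arbitrary, taking the maximum over $i$ proves the inequality.

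For sharpness I would exhibit an explicit equality instance: take $K_1$ arbitrary and let $K_2,\dots,K_j$ be singletons. Then $K_1+\cdots+K_j$ is a translate of $K_1$, so by translation invariance of the circumradius $\cir(K_1+\cdots+K_j,C)=\cir(K_1,C)$; on the other hand each singleton has circumradius $0$, so the left-hand maximum equals $\cir(K_1,C)$ as well, and equality holds. If one insists on admitting only full-dimensional bodies, the same configuration with the singletons replaced by balls of radius $\varepsilon$ shows that the bound is approached as $\varepsilon\to 0$, so it cannot be improved.

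As for difficulty, there is essentially no obstacle in the inequality itself, which is why I expect it to serve as a building block rather than a focal point; the only issues requiring care are the well-definedness of the circumradius (forcing $C$ to be full-dimensional and each $K_k$ to be nonempty so that the $p_k$ can be selected) and the translation invariance invoked in the sharpness construction. I would also note in passing that the hypothesis $j\in[n]$ plays no role in this argument and is carried only for consistency with the surrounding results, where the dimension restriction becomes relevant.
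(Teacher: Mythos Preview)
Your proof is correct and follows the same idea as the paper: the paper simply writes ``Since $K_i\subset K_1+\cdots+K_j$ for every $i\in[j]$, we have $\cir(K_i,C)\leq\cir(K_1+\cdots+K_j,C)$'', which is exactly your containment argument with the translation by $\sum_{k\neq i}p_k$ suppressed. Your version is in fact more careful, since you make the translation explicit and supply the sharpness example (which the paper omits); your observation that the hypothesis $j\in[n]$ is irrelevant is also correct.
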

	
	\begin{proof}
		Since $K_i\subset K_1+\cdots+K_j$ for every $i\in[j]$, we have
		$\cir(K_i,C)\leq\cir(K_1+\cdots+K_j,C)$ and thus
		\[
		\max_{i=1,\dots,j}\cir(K_i,C)\leq\cir(K_1+\cdots+K_j,C).
		\]
	\end{proof}


	\begin{proof}[Proof of Theorem \ref{thm:CircAdditionC}]
		Since $\cir(K_i,C)\leq\max_{i=1,\dots,j}\cir(K_i,C)$ for all $i \in [j]$ by Lemma \ref{lem:circSumC} we have
		\begin{equation} \label{eq:expanded_equality}
					\sum_{i=1}^j\cir(K_i,C)\leq j\,\max_{i=1,\dots,j}\cir(K_i,C)\leq j\,\cir(K_1+\cdots+K_j,C).
		\end{equation}

        We now show the equality case and begin with the \emph{only if} part. Assuming equality in \cref{eq:equality} we also obtain equality in \cref{eq:expanded_equality}.
        From this we conclude that $\cir(K_1,C)=\cdots=\cir(K_j,C)$.
        After suitable translations of $K_i$ and a rescalation of $C$,
        we can assume that $K_i \subset C$ and $\cir(K_i,C)=1$ for every $i\in[j]$. Hence together with the equality in \cref{eq:expanded_equality} we also obtain 
        $\cir(K_1+\cdots+K_j,C)=1$. Moreover, let $z_0\in\R^n$ be such that $K_1+\cdots+K_j\subset z_0+C$.

        Since $K_i \subset C$ and $\cir(K_i,C)=1$, 
        Proposition \ref{prop:OOC} implies
        the existence of points $p_i^l\in K_i\cap\bd(C)$ and vectors $a^l_i\in N(C,p^l_i)$,
        for $i\in[j]$, $l\in[k_i]$, $2\leq k_i\leq n+1$, such that
        \[
        0\in\conv(\{a^l_1:l\in[k_1]\})\cap\cdots\cap\conv(\{a^l_j:l\in[k_j]\}).
        \]
        Let us denote by $H_i:=\lin(\{a^l_i:l\in[k_i]\})$, for every $i\in[j]$.
        We will show that
        \[
        \left( K_1+\cdots+\widehat{K_i}+\cdots+K_j \right) \quad\bot\quad H_i,
        \]
        for every $i\in[j]$. For the sake of contradiction,
        let us suppose that
        \[
        \aff(K_1+\cdots+\widehat{K_{i}}+\cdots+K_j)\not\subset_t H_{i}^\bot,
        \]
        for some $i \in [j]$, i.e., there exist $x,y\in K_1+\cdots+\widehat{K_{i}}+\cdots+K_j$ such that
        the line $[x,y]\not\subset_t H_{i}^\bot$.


        In order to conclude the proof, since $[x,y]+K_{i}\subset K_1+\cdots+K_j\subset z_0+C$,
        we just need to show that $[x,y]+K_{i}\not\subset_tC$ holds. Indeed, assuming the contrary,
        we would have that
        \[
        \begin{split}
        [x,y]+K_{i} & \subset z+C \\
        & \subset z+H^\leq_{a^1_{i},(p^1_i)^Ta^1_{i}}\cap\cdots\cap H^\leq_{a^{k_{i}}_i,(p^{k_{i}}_i)^Ta^{k_{i}}_{i}},
        \end{split}
        \]
        for some $z\in H_i^\bot$. Hence
        \[
        (x+p^l_i)^Ta^l_i\leq (p^l_i)^Ta^l_i\quad\text{and}\quad(y+p^l_i)^Ta^l_i\leq (p^l_i)^Ta^l_i
        \]
        for every $l\in[k_i]$. This means that $x^Ta^l_i,y^Ta^l_i\leq 0$ for $l\in[k_i]$.
        Using the fact that $0\in\conv(\{a^l_i:l\in[k_i]\})$, we have
        $0=\sum_{l=1}^{k_i}\lambda_l a^l_i$ for suitable $\lambda_l>0$.
        Thus $0=\sum_{l=1}^{k_i}\lambda_l x^Ta^l_i$, which necessarily implies that
        $x^T a^l_i=0$ for all $l\in[k_i]$, and analogously $y^T a^l_i=0$ for all $l\in[k_i]$.
        Hence $x,y\in H_i^\bot$, a contradiction.


        Therefore $[x,y]+K_i\not\subset_tC$,
        implying the desired contradiction. Hence
        \[
        \aff(K_1+\cdots+\widehat{K_i}+\cdots+K_j)\subset_t H_i^\bot
        \]
        for every $i\in[j]$.
        Denoting by
        \[
        C_i:=H^\leq_{a^1_i,(p^1_i)^Ta^1_i}\cap\cdots\cap H^\leq_{a^{k_i}_i,(p^{k_i}_i)^Ta^{k_i}_i}
        \]
        which is a polytopal cylinder for every $i=1,\dots,j$, we have
        $\cir(K_i,C_i)=1$,
        and since $K_1+\cdots+K_i\subset z_0+C$ and $C\subset C_i$ for every $i=1,\dots,j$,
        then
        \[
        K_1+\cdots+K_j\subset z_0+C\subset z_0+C_1\cap\dots\cap C_j=(z_0+C_1)\cap\dots\cap(z_0+C_j),
        \]
        as desired.			

		For the \emph{if} part we obtain for all $i \in [j]$,
		\begin{align*}
			1 & \ge R(K_{1}+\dots+K_{n},\lambda C) \\
			& \ge R(K_{i},\lambda C) \\
			& \ge R(K_{i},C_{1} \cap \cdots \cap C_{n}) \\
			& \ge R(K_{i},C_{i}) = 1
		\end{align*}
		resulting in equality in the whole chain and in particular
		\[
		R(K_{1}+\dots+K_{n},\lambda C) =  R(K_{i},\lambda C)
		\]
		for all $i \in [j]$ which concludes the desired equality.
		
		\end{proof}
	
\section{Open questions}
	
    Denoting the \emph{$l_p$-unit ball} by $\B^n_p:=\{x\in\R^n:\|x\|_p^p=|x_1|^p+\cdots+|x_n|^p\leq 1\}$, for $p\geq 1$,
    then Theorem \ref{thrm:radii_2} implies that
    \[
    \cir(K_1+\cdots+K_j,\B^n_2)\geq\|(\cir(K_1,\B^n_2),\dots,\cir(K_j,\B^n_2))^T\|_2
    \]
    whereas Lemma \ref{lem:circSumC} implies that
    \[
    \cir(K_1+\cdots+K_j,\B^n_\infty)\geq\|(\cir(K_1,\B^n_\infty),\dots,\cir(K_j,\B^n_\infty))^T\|_\infty,
    \]
    and both inequalities are best possible. It is then quite tempting to conjecture the result below.
    \begin{conjecture}
    Let $K_i\in\K^n$, $i\in[j]$, and $p\in[2,\infty]$. Then
    \[
    \cir(K_1+\cdots+K_j,\B^n_p)\geq\left\|(\cir(K_1,\B^n_p),\dots,\cir(K_j,\B^n_p))^T\right\|_p.
    \]
    Moreover, the inequality is best possible, strengthening the general estimate given by Lemma \ref{lem:circSumC}.
    For instance, if $K_i:=[-e_i,e_i]$, $i\in[n]$, then
    \[
    \cir([-1,1]^n,\B^n_p)=n^{\frac{1}{p}}=\|(1,\dots,1)^T\|_p=\|(\cir(K_1,\B^n_p),\dots,\cir(K_n,\B^n_p))^T\|_p.
    \]
    \end{conjecture}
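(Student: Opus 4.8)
The plan is to reduce the general statement to the two-summand case and then, via Proposition \ref{prop:OOC}, to a colourful Carath\'eodory estimate on the $p$-sphere, mirroring the route from Theorem \ref{thm:norm_2} to Theorem \ref{thrm:radii_2}. First I would induct on $j$: writing $A:=K_1+\cdots+K_{j-1}$ and $B:=K_j$, a two-set inequality $\cir(A+B,\B^n_p)^p\geq\cir(A,\B^n_p)^p+\cir(B,\B^n_p)^p$ combined with the inductive hypothesis $\cir(A,\B^n_p)^p\geq\sum_{i<j}\cir(K_i,\B^n_p)^p$ yields the claim at once; for $p=\infty$, where $\norm{\cdot}_\infty=\max$, this reduction recovers exactly Lemma \ref{lem:circSumC}. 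Thus everything hinges on the case $j=2$.

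Next I would translate $A$ and $B$ so their $\cir(\cdot,\B^n_p)$-circumcentres lie at the origin, set $r:=\cir(A,\B^n_p)$, $s:=\cir(B,\B^n_p)$, and apply Proposition \ref{prop:OOC} to the optimal containments $A\subset r\B^n_p$ and $B\subset s\B^n_p$. This furnishes contact points $a_k\in A\cap\bd(r\B^n_p)$ and $b_m\in B\cap\bd(s\B^n_p)$ whose exterior outer normals are the $p$-duality images $J_p(a_k)$, $J_p(b_m)$ (with $J_p(x)_i=|x_i|^{p-1}\mathrm{sign}(x_i)$), and these normals satisfy $0\in\conv\{J_p(a_k)\}$ and $0\in\conv\{J_p(b_m)\}$. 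The goal becomes the colourful estimate: for every $c\in\R^n$ there exist indices $k,m$ with $\norm{a_k+b_m-c}_p\geq(r^p+s^p)^{1/p}$. Since $\conv\{a_k\}+\conv\{b_m\}\subset A+B$ and a circumradius is monotone under inclusion, this estimate gives $\cir(A+B,\B^n_p)\geq\min_c\max_{k,m}\norm{a_k+b_m-c}_p\geq(r^p+s^p)^{1/p}$, exactly as in Theorem \ref{thrm:radii_2}.

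To attack the estimate I would imitate the greedy argument of Theorem \ref{thm:norm_2}, replacing $\norm{\cdot}_2^2$ by the convex function $f:=\norm{\cdot}_p^p$, orthogonality by first-order convexity $f(x+y)\geq f(x)+\esc{\nabla f(x)}{y}$, and the inner-product sign selection by the positive-spanning property of $0\in\conv\{\text{normals}\}$: for every direction $v$ some normal has nonnegative inner product with $v$. Choosing $a_k$ with $\esc{J_p(a_k)}{-c}\geq0$ yields $\norm{a_k-c}_p^p\geq r^p$, and one then hopes a second selection of $b_m$ aligned with $\nabla f$ of the partial sum contributes the missing $s^p$ increment.

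The hard part will be exactly this last increment, and I expect it to be the decisive obstacle. For $p=2$ the identity $\norm{x+y}_2^2=\norm{x}_2^2+\norm{y}_2^2+2\esc{x}{y}$ makes the normal at a point coincide with the point, so a single sign condition boosts the squared norm by the full $\norm{y}_2^2$; for $p>2$ the map $J_p$ genuinely separates a point from its normal, and the desired three-term bound $\norm{x+y}_p^p\geq\norm{x}_p^p+\norm{y}_p^p+p\esc{J_p(x)}{y}$ is simply false (taking scalars $x=1$, $y=-2$, $p=3$ it reads $1\geq3$). Consequently the naive telescoping collapses to $\norm{a_k+b_m-c}_p^p\geq\max\{r^p,s^p\}$, merely reproving Lemma \ref{lem:circSumC}. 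Circumventing this appears to demand a global argument in place of the greedy one: for instance a minimax or fixed-point selection of $(k,m)$ tuned to the dual vector realizing $\norm{a_k+b_m-c}_p$, a coordinatewise analysis exploiting that the extremal configurations are axis-aligned (as in the cube $K_i=[-e_i,e_i]$, which already shows the bound is best possible since $\cir([-1,1]^n,\B^n_p)=n^{1/p}$), or a bespoke colourful Carath\'eodory theorem adapted to $\norm{\cdot}_p$. Finding the correct substitute for the Pythagorean identity is where the difficulty concentrates.
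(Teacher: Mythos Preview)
The statement you are attempting to prove is labelled a \emph{Conjecture} in the paper, placed in the ``Open questions'' section, and the paper supplies no proof whatsoever. There is therefore nothing to compare your attempt against: the authors themselves regard the inequality as open for general $p\in(2,\infty)$, and only record that the endpoint cases $p=2$ (Theorem~\ref{thrm:radii_2}) and $p=\infty$ (Lemma~\ref{lem:circSumC}) are known, together with the example $K_i=[-e_i,e_i]$ showing the conjectured bound would be sharp.

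Your proposal is candid about this: the reduction to $j=2$ and the translation to a colourful estimate on the $p$-sphere are sound and natural analogues of the $p=2$ route, but the key step---the three-term lower bound $\norm{x+y}_p^p\geq\norm{x}_p^p+\norm{y}_p^p+p\esc{J_p(x)}{y}$ that would replace the parallelogram identity---is, as you yourself note with the scalar counterexample, simply false for $p>2$. So the greedy telescoping of Theorem~\ref{thm:norm_2} collapses to the trivial $\max\{r^p,s^p\}$ bound, and you correctly identify that a genuinely new idea (a minimax selection, a coordinatewise argument, or a $p$-adapted colourful Carath\'eodory theorem) would be needed. That missing idea is precisely why the statement remains a conjecture; your write-up is an accurate diagnosis of the obstruction rather than a proof.
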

	
    It does not seem easy to guess the optimal bounds in extending Theorem \ref{thm:norm_2_3sets} to arbitrary $n\in\N$ and
    $i\geq n+1$. Nevertheless, we were told by Alexandr Polyanskii \cite{Po} (see also \cite{AN}) that the following conjecture for $i=n+1$ seems reasonable.
    \begin{conjecture}\label{conj:radiin+1}
    Let $U_i:=\{u_1^i,\dots,u_{k_i}^i\}\subset \S^{n-1}$, $i\in[n+1]$,
     $\lambda^i_1,\dots,\lambda^i_{k_i}>0$, $2\leq k_i\leq n+1$, and $c\in\R^n$, be such that
		\[
		0=\sum_{l=1}^{k_i}\lambda^i_lu^i_l,\quad\text{for every }i\in[n+1].
		\]
		Then
		\[
		\max_{l_1,\dots,l_{n+1}}\left\|u^1_{l_1}+\cdots+u^{n+1}_{l_{n+1}}-c\right\|_2\geq \sqrt{n+2}.
		\]
		Moreover, equality holds if and only if $c=0$ and, $U_i=\{\pm u^i\}$ $i=1,\dots,n+1$, 
and if $n$ is even then $u^1,\dots,u^{n+1}\in\S^{n-1}$ are the vertices of an n-dimensional regular simplex,
and if $n$ is odd, then $u^1,\dots,u^{2k+1}$ are the vertices of an $2k$-dimensional regular simplex, and
$u^{2k+2},\dots,u^{n+1}$ are some orthonormal set such that $u^i\bot u^j$ for every $1\leq i\leq 2k+1<j\leq n+1$.
    \end{conjecture}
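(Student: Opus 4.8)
The plan is to recast the bound as a statement about circumradii and then about quadratic forms on the discrete cube. As in the proof of Theorem~\ref{thrm:radii_2}, optimising over $c$ amounts to replacing $c$ by the circumcenter of the finite point set $\{u^1_{l_1}+\cdots+u^{n+1}_{l_{n+1}}\}$; hence the quantity to be bounded, minimised over $c$, is precisely the Euclidean circumradius $\cir(U_1+\cdots+U_{n+1})$ (with each $U_i$ replaced by $\conv U_i$), so the conjecture reads $\cir(U_1+\cdots+U_{n+1})\geq\sqrt{n+2}$, with equality forcing the circumcenter to be the origin (the role of $c=0$). Since the asserted equality configurations are antipodal pairs $U_i=\{\pm u^i\}$, and since enlarging any $U_i$ only enlarges the Minkowski sum and hence the circumradius, the genuine difficulty lies with the minimal configurations; I expect the antipodal pairs to be extremal among these, and a first step would be to establish this by a compactness plus variational argument, reducing the inequality to the antipodal case.

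For antipodal pairs with $c=0$, choosing representatives is choosing signs $\varepsilon\in\{\pm1\}^{n+1}$, and $\|\sum_i\varepsilon_iu^i\|_2^2=\varepsilon^TG\varepsilon$ for the Gram matrix $G=((u^i)^Tu^k)_{i,k}$, which is positive semidefinite, has unit diagonal, size $n+1$, and---because $n+1$ vectors in $\R^n$ are linearly dependent---rank at most $n$. The conjecture thus becomes $\max_{\varepsilon\in\{\pm1\}^{n+1}}\varepsilon^TG\varepsilon\geq n+2$. Uniform averaging over the cube gives $\mathbb E[\varepsilon^TG\varepsilon]=\mathrm{trace}(G)=n+1$, which reproduces the $\sqrt{n+1}$ of Theorem~\ref{thm:norm_2} (for general sets one averages instead with the probability weights $\lambda^i_l/\sum_m\lambda^i_m$, whose barycentre is $0$, with the same outcome). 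The whole problem is to extract the extra $+1$ from the single rank drop. Here I would record the extremal matrices: the regular simplex gives $G=\tfrac{n+1}{n}\Pi$ with $\Pi$ the orthogonal projection onto $\mathbf 1^\perp$, whence $\varepsilon^TG\varepsilon=\tfrac{n+1}{n}\big((n+1)-(\textstyle\sum_i\varepsilon_i)^2/(n+1)\big)$, while the odd-$n$ configuration gives the direct sum of a regular $(2k)$-simplex block and a single orthogonal coordinate, whose contributions $2k+2$ and $1$ again total $n+2$. In the even case the maximum $n+2$ is forced by the parity constraint $\sum_i\varepsilon_i\neq0$ (the matrix has odd size $n+1$); this parity mechanism is precisely why the analysis must bifurcate according to $n$.

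The inequality $\max_\varepsilon\varepsilon^TG\varepsilon\geq n+2$ is the crux, and I expect it to be the main obstacle. A second-moment computation gives $\mathbb E[(\varepsilon^TG\varepsilon-(n+1))^2]=4\sum_{i<k}g_{ik}^2=2\big(\sum_i\sigma_i^2-(n+1)\big)$, and since $G$ has at most $n$ positive eigenvalues summing to $n+1$ one has $\sum_i\sigma_i^2\geq(n+1)^2/n$; this proves that the maximum strictly exceeds $n+1$, but when combined only with $\varepsilon^TG\varepsilon\geq0$ it yields no better than $n+1+O(1/n)$. The simplex explains the loss: its improvement is spread evenly over all $\binom{n+1}{2}$ pairs (each product equals $-1/n$) rather than concentrated in one large entry, so the greedy/sequential scheme behind Theorem~\ref{thm:norm_2} and the single-close-pair device used for $n=2$ in Theorem~\ref{thm:norm_2_3sets} cannot reach $\sqrt{n+2}$ once $n\geq3$. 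The plan is instead to exploit the null vector $w\neq0$, $Gw=0$---equivalently the forced relation $\sum_iw_iu^i=0$---jointly with the combinatorics of the cube: to round $w$ together with the top eigenspace to a sign pattern that both avoids the parity-forbidden value of $\sum_i\varepsilon_i$ and realises a unit gain, treating the two parities of $n$ through their respective extremal structures, most plausibly by an induction on $n$ that peels off one orthogonal coordinate exactly as in the odd-case block decomposition.

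For the equality case I would follow the rigidity pattern already used in the paper (as in Theorems~\ref{thm:norm_2} and~\ref{thm:norm_2_3sets}): reversing the chain of inequalities forces $c=0$, forces all nonzero eigenvalues of $G$ to coincide (so that no variance is ``wasted''), and forces the optimal $\varepsilon$ to attain the minimal admissible $(\sum_i\varepsilon_i)^2$. Together these should pin $G$ down to the simplex Gram matrix when $n$ is even, and to the simplex-plus-orthogonal-coordinate matrix when $n$ is odd, which translates back to $U_i=\{\pm u^i\}$ with the $u^i$ forming a regular simplex, respectively a regular $2k$-simplex together with an orthonormal complement, as stated. I expect the even case to be the model and the odd case to require the extra bookkeeping of the orthogonal block.
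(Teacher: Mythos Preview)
The statement you are attempting to prove is \emph{Conjecture}~\ref{conj:radiin+1}; the paper presents it in the section on open questions, attributes it to a private communication of Polyanskii, and offers no proof. There is therefore no ``paper's own proof'' to compare against.

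Your proposal is not a proof either, and you say so yourself. The decisive inequality
\[
\max_{\varepsilon\in\{\pm1\}^{n+1}}\varepsilon^TG\varepsilon\geq n+2
\]
for a rank-$\leq n$ positive semidefinite $(n+1)\times(n+1)$ matrix $G$ with unit diagonal is left as ``the crux'' and ``the main obstacle'', and the second-moment bound you compute falls short by a factor of order $n$. The subsequent plan---rounding the null vector $w$ together with the top eigenspace to a good sign pattern while respecting parity---is a reasonable heuristic but is not carried out, and it is exactly here that the problem is known to be hard (this is essentially the content of the references \cite{AN,Po} the paper cites).

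There is also a gap earlier in the reduction. You assert that it suffices to treat antipodal pairs $U_i=\{\pm u^i\}$ because ``enlarging any $U_i$ only enlarges the Minkowski sum and hence the circumradius''. Monotonicity under enlargement is correct, but it does not imply that the infimum over all admissible configurations is attained at antipodal pairs: a general $U_i$ with $0\in\conv(U_i)$ need not contain any antipodal pair (e.g.\ three unit vectors at mutual angle $2\pi/3$ in the plane), so antipodal configurations are not obtained by shrinking arbitrary ones. Your appeal to ``compactness plus a variational argument'' would have to rule out non-antipodal minimisers, and nothing in the proposal does this. In short, both the reduction step and the core quadratic-form inequality remain open, which is consistent with the paper's decision to state the result as a conjecture.
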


    Let us observe that in the optimal case above, one should recognize that if $n$ is even then
    \[
    \left\|u^1+\cdots+u^{\frac{n}{2}+1}-u^{\frac{n}{2}+2}-\cdots-u^{n+1}\right\|_2=\sqrt{n+2}
    \]
    whereas, if $n$ is odd, then
    \[
    \left\|u^1+\cdots+u^{k+1}-u^{k+2}-\cdots-u^{2k+1}\pm u^{2k+2}\pm\cdots\pm u^{n+1}\right\|_2=\sqrt{n+2}.
    \]

    Moreover, Conjecture \ref{conj:radiin+1} might imply that for any $K_i\in\K^n$, $i \in [n+1]$, with
    $\cir(K_i)=1$ for every $i\in[n+1]$, we have
    \[
    \cir\left(\sum_{i=1}^{n+1}K_i\right)\geq \sqrt{n+2}.
    \]
    which again would be best possible.

	\emph{Acknowledgement} This paper started during the development of the lecture \emph{Selected topics
		on Convex and Discrete Geometry}, Wintersemester 2016/17, at the Zentrum Mathematik of the Technische Universit\"at M\"unchen (cf.~\cite{GM}).
The third author also thanks Technische Universit\"at M\"unchen and the University Centre of Defence of San Javier, where
part of this work has been done. Finally, we would like to thank the anonymous referee for helping us in improving several aspects of this paper.

\end{document}